\definecolor{darkblue}{RGB}{2,58,141}
\definecolor{lightgray}{RGB}{229,235,244}
\definecolor{lightviolet}{RGB}{196,150,241}
\setlist[enumerate,1]{label=(\arabic*).,ref=\thetheorem\,(\arabic*),
font=\textup,leftmargin=7mm,labelsep=1.5mm,topsep=0mm,itemsep=-0.8mm}
\setlist[enumerate,2]{label=(\alph*).,font=\textup,
leftmargin=7mm,labelsep=1.5mm,topsep=-0.8mm,itemsep=-0.8mm}
\numberwithin{equation}{section}
\theoremstyle{plain}
\newtheorem{theorem}{Theorem}[section]
\newtheorem{lemma}{Lemma}[section]
\newtheorem{corollary}{Corollary}[section]
\newtheorem{definition}{Definition}[section]
\newtheorem{remark}{Remark}[section]
\theoremstyle{nonumberplain}
\newtheorem{proof}{Proof.}
\renewcommand{\cup}{\operatorname*{\scalebox{0.93}{$\bigcup$}}\limits}
\renewcommand*\env@matrix[1][\arraystretch]{%
\edef\arraystretch{#1}%
\hskip -\arraycolsep
\let\@ifnextchar\new@ifnextchar
\array{*\c@MaxMatrixCols c}}
\definecolor{primaryL}{HTML}{B2DFDB}
\begin{document}
	
\begin{frontmatter}
\title{On the principal eigenvectors of uniform hypergraphs\tnoteref{titlenote}}
\tnotetext[titlenote]{This work was supported by the National Nature 
Science Foundation of China (Nos.\,11471210, 11101263)}

\author[]{Lele Liu}

\author[]{Liying Kang}

\author[]{Xiying Yuan\corref{correspondingauthor}}
\cortext[correspondingauthor]{Corresponding author}
\ead{xiyingyuan2007@hotmail.com}

\address{Department of Mathematics, Shanghai University, Shanghai 200444, China}

\begin{abstract}
Let $\mathcal{A}(H)$ be the adjacency tensor of $r$-uniform hypergraph $H$. If $H$ is 
connected, the unique positive eigenvector $x=(x_1,x_2,\cdots,x_n)^{\mathrm{T}}$ with 
$||x||_r=1$ corresponding to spectral radius $\rho(H)$ is called the principal eigenvector 
of $H$. The maximum and minimum entries of $x$ are denoted by $x_{\max}$ and $x_{\min}$, 
respectively. In this paper, we investigate the bounds of $x_{\max}$ and $x_{\min}$ in 
the principal eigenvector of $H$. Meanwhile, we also obtain some bounds of the ratio 
$x_i/x_j$ for $i$, $j\in [n]$ as well as the principal ratio $\gamma(H)=x_{\max}/x_{\min}$ 
of $H$. As an application of these results we finally give an estimate of the gap of spectral 
radii between $H$ and its proper sub-hypergraph $H'$.
\end{abstract}

\begin{keyword}
Uniform hypergraph \sep 
Adjacency tensor \sep 
Principal eigenvector \sep 
Principal ratio \sep 
Weighted incidence matrix 

\MSC[2010] 
15A42 \sep  
05C50
\end{keyword}
\end{frontmatter}

\section{Introduction}

Let $G$ be a simple connected graph, and $A(G)$ be the adjacency matrix of $G$. 
Perron-Frobenius theorem implies that $A(G)$ has a unique unit positive eigenvector 
corresponding to spectral radius $\rho(G)$, which is usually called the principal eigenvector 
of $G$. The principal eigenvector plays an important role in spectral graph theory, and 
there exist some literatures concerning that. In 2000, Papendieck and Recht 
\cite{Papendieck:Maximal entries principal eigenvector} posed an upper bound for the 
maximum entry of the principal eigenvector of a graph. Later, Zhao and Hong \cite{ZhaoHong} 
further investigated bounds for maximum entry of the principal eigenvector of a symmetric 
nonnegative irreducible matrix with zero trace. In 2007, Cioab$\check{\text{a}}$ and 
Gregory \cite{Sebastian:Principal eigenvectors} improved the bound of Papendieck and 
Recht \cite{Papendieck:Maximal entries principal eigenvector} in terms of the vertex 
degree. Das \cite{Das:Sharp upper bound,Das:Maximal and minimal entry} obtained bounds 
for the maximum entry of the principal eigenvector of the signless Laplacian matrix and 
distance matrix in 2009 and 2011, respectively. Recently, Das et al. \cite{Das:Distance signless Laplacian} 
determine upper and lower bounds of the maximum entry of the principal eigenvector of 
the distance signless Laplacian matrix.  

Hypergraph is a natural generalization of ordinary graph (see \cite{Berge:Hypergraph}). 
A {\em hypergraph} $H=(V,E)$ on $n$ vertices is a set of vertices, say $V=\{1,2,\ldots,n\}$ 
and a set of edges, say $E=\{e_{1},e_{2},\ldots,e_{m}\}$, where $e_{i}=\{i_{1},i_{2},\ldots,i_{\ell}\}$, 
$i_j\in [n]:=\{1,2,\ldots,n\}$, $j\in[\ell]$. A hypergraph is called {\em $r$-uniform} if every 
edge contains precisely $r$ vertices. Let $H=(V,E)$ be an $r$-uniform hypergraph on $n$ vertices. 
The adjacency tensor (see \cite{Cooper:Spectra Uniform Hypergraphs}) of $H$ is defined as 
the order $r$ dimension $n$ tensor $\mathcal{A}(H)$ whose $(i_1i_2\cdots i_r)$-entry is
\[
(\mathcal{A}(H))_{i_1i_2\cdots i_r}=
\begin{dcases}
\frac{1}{(r-1)!} & \text{if}~\{i_1,i_2,\ldots,i_r\}\in E(H),\\
0 & \text{otherwise}.
\end{dcases}
\]

Qi \cite{Qi2005} and Lim \cite{Lim} independently introduced the concept of eigenvalues of tensors, 
from which one can get the definition of the eigenvalues of the adjacency tensor of an $r$-uniform 
hypergraph (see more in Section \ref{sec2}). Obviously, adjacency tensor is a symmetric nonnegative
tensor. The Perron-Frobenius theorem for nonnegative tensors has been established 
(see \cite{K.C.Chang.etc:Perron-Frobenius Theorem}, \cite{Yang:Nonegative Weakly Irreducible Tensors}
and the references in them). Based on the Perron-Frobenius theorem there exists a unique positive 
eigenvector $x$ with $||x||_r=1$ for $\mathcal{A}(H)$ of a connected $r$-uniform hypergraph $H$. 
This vector will be called the {\em principal eigenvector} of $H$. Denote by $x_{\max}$ the 
maximum entry of $x$ and by $x_{\min}$ the minimum entry of $x$. The {\em principal ratio}, 
$\gamma(H)$, of $H$ is defined as $x_{\max}/x_{\min}$.

Recently, Nikiforov \cite{Nikiforov-3} presented some bounds on the entry of the principal 
eigenvector of an $r$-uniform hypergraph (see Section 7 of \cite{Nikiforov-3}). Li et al. 
\cite{Li:Principal eigenvector} posed some lower bounds for principal eigenvector 
of connected uniform hypergraphs in terms of vertex degrees and the number of vertices. 
In this paper, we generalize some classical bounds on the principal eigenvector to an $r$-uniform 
hypergraph. In Section \ref{sec2}, we introduce some notations and necessary lemmas. In 
Section \ref{sec3}, we present some upper bounds on the maximum and minimum entries in the 
principal eigenvector $x=(x_1,x_2,\ldots,x_n)^{\mathrm{T}}$ of a connected $r$-uniform 
hypergraph $H$. Meanwhile, we also investigate bounds of the ratio $x_i/x_j$ for $i$, 
$j\in [n]$ as well as the principal ratio $\gamma(H)=x_{\max}/x_{\min}$ of $H$. Based on 
these results, in Section \ref{sec4} we finally give an estimate of the gap of spectral 
radii between $H$ and its proper sub-hypergraph $H'$.

\section{Preliminaries}
\label{sec2}

Let $H=(V(H),E(H))$ and $H'=(V(H'),E(H'))$ be two $r$-uniform hypergraphs. If $V(H')\subseteq V(H)$ 
and $E(H')\subseteq E(H)$, then $H'$ is called a {\em sub-hypergraph} of $H$. If $H'$ is a sub-hypergraph 
of $H$, and $H'\neq H$, then $H'$ is called a {\em proper sub-hypergraph} of $H$. A hypergraph $H$ is 
called a {\em linear} hypergraph provided that each pair of the edges of $H$ has at most one common 
vertex (see \cite{Bretto}). 
 
For a vertex $v\in V(H)$, the {\em degree} $d_{H}(v)$ is defined as the number of edges containing $v$.
We denote by $\delta(H)$ and $\Delta(H)$ the minimum and maximum degrees of the vertices of $H$.  
In a hypergraph $H$, two vertices $u$ and $v$ are {\em adjacent} if there is an edge $e$ of $H$ such 
that $\{u,v\}\subseteq e$. A vertex $v$ is said to be {\em incident} to an edge $e$ if $v\in e$. 
A {\em walk} of hypergraph $H$ is defined to be an alternating sequence of vertices and edges 
$v_1e_1v_2e_2\cdots v_{\ell}e_{\ell}v_{\ell+1}$ satisfying that both $v_{i}$ and $v_{i+1}$ are 
incident to $e_i$ for $1\leqslant i\leqslant\ell$. A walk is called a {\em path} if all vertices 
and edges in the walk are distinct. The {\em length} of a path is the number of edges in it. A 
hypergraph $H$ is called {\em connected} if for any vertices $u$, $v$, there is a path connecting 
$u$ and $v$. The {\em distance} between two vertices is the length of the shortest path connecting 
them. The {\em diameter} of a connected $r$-uniform hypergraph $H$ is the maximum distance among all 
vertices of $H$.

The following definition was introduced by Qi \cite{Qi2005}.

\begin{definition}
[\cite{Qi2005}]
Let $\mathcal{A}$ be an order $r$ dimension $n$ tensor, $x=(x_{1},x_2,\ldots,x_{n})^{\mathrm{T}}\in\mathbb{C}^{n}$ 
be a column vector of dimension $n$. Then $\mathcal{A}x^{r-1}$ is defined to be a vector in $\mathbb{C}^{n}$ 
whose $i$-th component is the following
\begin{equation}
\label{eq:eigenvector}
(\mathcal{A}x^{r-1})_i=\sum_{i_2,\cdots,i_r=1}^na_{ii_2\cdots i_r}x_{i_2}\cdots x_{i_r},~~
i=1,2,\ldots,n.
\end{equation}
If there exists a number $\lambda\in\mathbb{C}$ and a nonzero vector $x\in\mathbb{C}^{n}$ such that
\begin{equation}
\label{Eigenequations}
\mathcal{A}x^{r-1}=\lambda x^{[r-1]}, 
\end{equation}
then $\lambda$ is called an {\em eigenvalue} of $\mathcal{A}$, $x$ is called an {\em eigenvector} of 
$\mathcal{A}$ corresponding to the eigenvalue $\lambda$, where $x^{[r-1]}=(x_{1}^{r-1},x_{2}^{r-1},\ldots,x_{n}^{r-1})^{\mathrm{T}}$. 
The {\em spectral radius} of $\mathcal{A}$ is the maximum modulus of the eigenvalues of $\mathcal{A}$, i.e., 
$\rho(\mathcal{A})=\max\{|\lambda|:\lambda~\text{is an eigenvalue of}~\mathcal{A}\}$.
\end{definition}

For an $r$-uniform hypergraph $H$, the spectrum, eigenvalues and spectral radius of $H$ are defined to 
be those of its adjacency tensor $\mathcal{A}(H)$. By using the general product of tensors defined by 
Shao in \cite{Shao2013}, $\mathcal{A}x^{r-1}$ can be simply written as $\mathcal{A}x$. In the remaining 
part of this paper, we will use $\mathcal{A}x$ to denote $\mathcal{A}x^{r-1}$.

\begin{theorem}
[\cite{Qi2013}]
\label{relaigh}
Let $\mathcal{A}$ be a nonnegative symmetric tensor of order $r$ and dimension $n$,
denote $\mathbb{R}_{+}^{n}=\{x\in\mathbb{R}^{n}\,|\, x\geqslant 0\}$. Then we have
\begin{equation}
\label{Expression for spectral radius}
\rho(\mathcal{A})=\max\left\{x^{\mathrm{T}}(\mathcal{A}x)\,|\, x\in\mathbb{R}_{+}^{n},||x||_r=1\right\}.
\end{equation}
Furthermore, $x\in\mathbb{R}_{+}^{n}$ with $||x||_r=1$ is an optimal solution of the above 
optimization problem if and only if it is an eigenvector of $\mathcal{A}$ corresponding to 
the eigenvalue $\rho(\mathcal{A})$.
\end{theorem}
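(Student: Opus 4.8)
The plan is to study the degree-$r$ form $f(x)=x^{\mathrm{T}}(\mathcal{A}x)=\sum_{i_1,\dots,i_r=1}^{n}a_{i_1\cdots i_r}x_{i_1}\cdots x_{i_r}$ on the set $S=\{x\in\mathbb{R}_+^n:\|x\|_r=1\}$. Since $S$ is nonempty and compact and $f$ is continuous, $f$ attains a maximum value $m$ at some $x^\ast\in S$; because every entry $a_{i_1\cdots i_r}\geqslant 0$ and $x^\ast\geqslant 0$, we have $m=f(x^\ast)\geqslant 0$. The whole statement then reduces to two facts: (i) every maximizer $x^\ast$ satisfies the eigenequation $\mathcal{A}x^\ast=m\,(x^\ast)^{[r-1]}$, so that $m$ is an eigenvalue of $\mathcal{A}$; and (ii) $|\lambda|\leqslant m$ for every eigenvalue $\lambda$ of $\mathcal{A}$, so that $\rho(\mathcal{A})\leqslant m$. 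Granting these, $m$ is an eigenvalue with $m\leqslant\rho(\mathcal{A})$, while (ii) gives $\rho(\mathcal{A})\leqslant m$; hence $m=\rho(\mathcal{A})$, which is \eqref{Expression for spectral radius}.

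For (i) I would use a Lagrange-multiplier argument, exploiting that $f$ is homogeneous of degree $r$ with $\nabla f=r\,\mathcal{A}x$ (by symmetry of $\mathcal{A}$) and that the constraint $g(x)=\sum_i x_i^r$ has $\nabla g=r\,x^{[r-1]}$. Where $x^\ast>0$, stationarity forces $\mathcal{A}x^\ast=\lambda(x^\ast)^{[r-1]}$ for a scalar $\lambda$, and the inner product with $x^\ast$, together with Euler's identity $x^{\mathrm{T}}\nabla f=r f$ and $\|x^\ast\|_r^r=1$, yields $\lambda=f(x^\ast)=m$. The delicate point, and the main obstacle, is the boundary case where $Z=\{i:x_i^\ast=0\}$ is nonempty: for $i\in Z$ the right-hand side $m(x_i^\ast)^{r-1}$ vanishes, so one must prove $(\mathcal{A}x^\ast)_i=0$ rather than merely obtain an inequality from the Karush--Kuhn--Tucker conditions. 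I would settle this by a one-sided perturbation: moving along $x^\ast+\varepsilon e_i$ and renormalizing, the norm changes only through the factor $1+\varepsilon^r$, so since $r\geqslant 2$ one gets $f\bigl((x^\ast+\varepsilon e_i)/\|x^\ast+\varepsilon e_i\|_r\bigr)=m+\varepsilon\,r\,(\mathcal{A}x^\ast)_i+o(\varepsilon)$; maximality forces $(\mathcal{A}x^\ast)_i\leqslant 0$, while $\mathcal{A}\geqslant 0$ and $x^\ast\geqslant 0$ give $(\mathcal{A}x^\ast)_i\geqslant 0$, whence $(\mathcal{A}x^\ast)_i=0$. Restricting $f$ to the face $\{x:x_j=0,\ j\in Z\}$, on which $x^\ast$ has all coordinates positive, handles the indices $i\notin Z$ exactly as in the interior case and again gives multiplier $m$; together these yield $\mathcal{A}x^\ast=m(x^\ast)^{[r-1]}$.

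Fact (ii) is the easier half. Given any eigenpair $\mathcal{A}z=\lambda z^{[r-1]}$ with $z\in\mathbb{C}^n\setminus\{0\}$, I would apply the triangle inequality coordinatewise: $|\lambda|\,|z_i|^{r-1}=|(\mathcal{A}z)_i|\leqslant(\mathcal{A}|z|)_i$, where $|z|=(|z_1|,\dots,|z_n|)^{\mathrm{T}}$ and nonnegativity of $\mathcal{A}$ is used. Multiplying by $|z_i|$ and summing over $i$ gives $|\lambda|\sum_i|z_i|^r\leqslant|z|^{\mathrm{T}}(\mathcal{A}|z|)=f(|z|)$; normalizing $w=|z|/\||z|\|_r\in S$ and using homogeneity of $f$ yields $|\lambda|\leqslant f(w)\leqslant m$. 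Taking the maximum over all eigenvalues gives $\rho(\mathcal{A})\leqslant m$, completing the displayed identity.

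Finally, for the characterization of optimizers: if $x\in S$ is optimal then $f(x)=m=\rho(\mathcal{A})$ and (i) shows $x$ is an eigenvector for $\rho(\mathcal{A})$; conversely, if $x\in S$ satisfies $\mathcal{A}x=\rho(\mathcal{A})x^{[r-1]}$, then $f(x)=x^{\mathrm{T}}(\rho(\mathcal{A})x^{[r-1]})=\rho(\mathcal{A})\|x\|_r^r=\rho(\mathcal{A})=m$, so $x$ is optimal. The only genuinely nontrivial ingredient throughout is the boundary analysis in (i).
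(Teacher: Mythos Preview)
The paper does not prove this theorem at all: it is quoted verbatim from \cite{Qi2013} as a known result and used as a tool, so there is no ``paper's own proof'' to compare against. Your argument is a correct, self-contained proof. The compactness of $S$ gives existence of a maximizer $x^\ast$; your Lagrange/KKT analysis correctly splits into the positive coordinates (handled on the face where $x^\ast$ is interior) and the zero coordinates, where the one-sided perturbation $x^\ast+\varepsilon e_i$ is exactly the right device: because $x_i^\ast=0$ the $r$-norm only changes at order $\varepsilon^r$, so the first-order term $\varepsilon\,r(\mathcal{A}x^\ast)_i$ survives and maximality forces $(\mathcal{A}x^\ast)_i\leqslant 0$, while nonnegativity gives the reverse inequality. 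The bound $|\lambda|\leqslant m$ via $|z|$ and homogeneity is standard and correct, and the final equivalence of optimizers and Perron eigenvectors follows immediately. In short, you have supplied a valid proof of a result the paper merely cites.
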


A novel method, weighted incidence matrix method is introduced by Lu and Man for computing the
spectral radii of hypergraphs in \cite{LuAndMan:Small Spectral Radius}. 

\begin{definition}
[\cite{LuAndMan:Small Spectral Radius}]
\label{defn:WeightedIncidenceMatrix}
A {\em weighted incidence matrix} $B$ of a hypergraph $H=(V(H),E(H))$ is a $|V(H)|\times |E(H)|$ 
matrix such that for any vertex $v$ and any edge $e$, the entry $B(v,e)>0$ if $v\in e$ and $B(v,e)=0$ 
if $v\notin e$.
\end{definition}

\begin{definition}
[\cite{LuAndMan:Small Spectral Radius}]
\label{defn:alpha-normal}
A hypergraph $H$ is called {\em $\alpha$-normal} if there exists a weighted incidence matrix $B$ 
satisfying\\
(1). $\sum_{e:v\in e}B(v,e)=1$, for any $v\in V(H)$; \\
(2). $\prod_{v:v\in e}B(v,e)=\alpha$, for any $e\in E(H)$. \\
Moreover, the weighted incidence matrix $B$ is called {\em consistent} if for any cycle 
$v_0e_1v_1\cdots e_{\ell}v_{\ell}$ $(v_{\ell}=v_{0})$
\[
\prod^{\ell}_{i=1}\frac{B(v_{i},e_{i})}{B(v_{i-1},e_{i})}=1.
\]
In this case, $H$ is called {\em consistently $\alpha$-normal}.
\end{definition}

\begin{lemma}
[\cite{LuAndMan:Small Spectral Radius}]
\label{lem:StrictlySubnormal}
Let $H$ be a connected $r$-uniform hypergraph. Then $H$ is consistently $\alpha$-normal 
if and only if $\rho(H)=\alpha^{-\frac{1}{r}}$.
\end{lemma}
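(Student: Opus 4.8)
The plan is to prove both implications through an explicit correspondence between a positive eigenvector $x$ of $\mathcal{A}(H)$ with eigenvalue $\rho=\alpha^{-1/r}$ and a consistent weighted incidence matrix $B$ realizing $\alpha$-normality. The bridge in both directions is the identity
\[
B(v,e)=\frac{1}{\rho}\cdot\frac{\prod_{u\in e}x_u}{x_v^{r}}
=\frac{1}{\rho}\cdot\frac{\prod_{u\in e\setminus\{v\}}x_u}{x_v^{r-1}},\qquad v\in e .
\]
I will use throughout that the eigenequation $\mathcal{A}x=\rho\,x^{[r-1]}$ reads, componentwise, $\sum_{e:v\in e}\prod_{u\in e\setminus\{v\}}x_u=\rho\,x_v^{r-1}$.

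For the forward direction I would start from the unique positive eigenvector $x$ with eigenvalue $\rho(H)=\alpha^{-1/r}$ and define $B$ by the displayed formula. Condition~(1) of $\alpha$-normality is then exactly the eigenequation divided by $\rho\,x_v^{r-1}$. Condition~(2) is a short bookkeeping computation: in $\prod_{v\in e}B(v,e)$ each factor $x_u$ with $u\in e$ occurs $r-1$ times in the numerator, while the denominators contribute $\rho^{r}\bigl(\prod_{u\in e}x_u\bigr)^{r-1}$, so that $\prod_{v\in e}B(v,e)=\rho^{-r}=\alpha$. Finally, for $u,v\in e$ the ratio collapses to $B(v,e)/B(u,e)=(x_u/x_v)^{r}$, so around any cycle $v_0e_1v_1\cdots e_\ell v_\ell$ with $v_\ell=v_0$ the product telescopes to $(x_{v_0}/x_{v_\ell})^{r}=1$, giving consistency.

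The converse is the more delicate half, and is where I expect the main obstacle: given a consistent $B$ obeying~(1) and~(2), one must manufacture a genuine positive vector $x$, whereas $B$ pins down only the local ratios. Combining~(2) with $B(v,e)/B(u,e)=(x_u/x_v)^{r}$ forces $x_u/x_v=\bigl(B(v,e)/B(u,e)\bigr)^{1/r}$ for $u,v\in e$; I would fix a root vertex, set its entry to $1$, and propagate these $r$-th-root ratios along paths. Here the consistency hypothesis is indispensable: it guarantees that the product of ratios around every cycle equals $1$, so, $H$ being connected, the resulting $x>0$ is independent of the path chosen and hence well defined. It then remains to verify the correspondence. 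Solving $\prod_{u\in e}B(u,e)=\alpha$ for $B(v,e)$ by means of the forced ratios gives $B(v,e)=\alpha^{1/r}\prod_{u\in e}x_u/x_v^{r}$; substituting into condition~(1) yields $\sum_{e:v\in e}\prod_{u\in e\setminus\{v\}}x_u=\alpha^{-1/r}x_v^{r-1}$, i.e.\ $\mathcal{A}x=\alpha^{-1/r}x^{[r-1]}$. Since $x>0$ and $H$ is connected, the Perron--Frobenius theorem for nonnegative tensors identifies $\alpha^{-1/r}$ as the spectral radius, whence $\rho(H)=\alpha^{-1/r}$.
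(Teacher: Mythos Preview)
The paper does not supply its own proof of this lemma; the result is only quoted from Lu and Man, so there is nothing in the present paper to compare your argument against.

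That said, your proposal is essentially the standard proof and is correct. Building $B$ from the Perron eigenvector via $B(v,e)=\rho^{-1}\prod_{u\in e\setminus\{v\}}x_u/x_v^{r-1}$ is exactly the correspondence underlying the Lu--Man framework: condition~(1) is the eigenequation, condition~(2) is the bookkeeping identity you describe, and consistency follows by the telescoping $(x_{v_{i-1}}/x_{v_i})^r$ ratios. In the other direction, fixing a base vertex and propagating the $r$-th-root ratios $(B(v,e)/B(u,e))^{1/r}$ along paths, using consistency to make this path-independent on the connected $H$, and then reading off the eigenequation from condition~(1) is the right argument; the final appeal to Perron--Frobenius for weakly irreducible nonnegative tensors correctly identifies the resulting eigenvalue $\alpha^{-1/r}$ with $\rho(H)$.

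Two small remarks. First, your labels are swapped relative to the biconditional as stated: what you call the ``forward direction'' starts from $\rho(H)=\alpha^{-1/r}$, which is the converse as written. Second, in the half starting from $B$, the sentence ``Combining~(2) with $B(v,e)/B(u,e)=(x_u/x_v)^r$'' is mildly circular, since $x$ has not yet been constructed; this should be phrased as the \emph{definition} you are about to impose on the ratios $x_u/x_v$, after which your propagation argument goes through unchanged.
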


\section{The principal eigenvectors of uniform hypergraphs}
\label{sec3}
\subsection{The extreme components of principal eigenvector}

In this subsection we shall present some bounds on $x_{\max}$ and $x_{\min}$ in the principal 
eigenvector of a connected $r$-uniform linear hypergraph $H$. 

The following lemma is very useful for us, so we reproduce the proof of Lu and Man
\cite{LuAndMan:Small Spectral Radius}.

\begin{lemma}
[\cite{LuAndMan:Small Spectral Radius}]
Suppose that $H$ is a connected $r$-uniform hypergraph. Let $H$ be consistently $\alpha$-normal 
with weighted incidence matrix $B$. If $x=(x_1,x_2,\ldots,x_n)^{\mathrm{T}}$ is the principal 
eigenvector of $H$, then for any edge 
$e=\{v_{i_1},v_{i_2},\ldots,v_{i_r}\}\in E(H)$, we have
\begin{equation}
\label{eq:B(v,e) is constant}
B(v_{i_1},e)^{\frac{1}{r}}\cdot x_{v_{i_1}}=
B(v_{i_2},e)^{\frac{1}{r}}\cdot x_{v_{i_2}}=
\cdots=
B(v_{i_r},e)^{\frac{1}{r}}\cdot x_{v_{i_r}}.
\end{equation}
\end{lemma}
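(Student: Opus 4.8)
The plan is to build, directly from the weighted incidence matrix $B$, a positive vector $y$ that is an eigenvector of $\mathcal{A}(H)$ for the eigenvalue $\alpha^{-1/r}$ and in which the asserted identity holds by construction. Since $\rho(H)=\alpha^{-1/r}$ by Lemma \ref{lem:StrictlySubnormal}, uniqueness of the positive eigenvector will then force $x$ to be a positive multiple of $y$, and because the identity \eqref{eq:B(v,e) is constant} is scale-invariant it will pass from $y$ to $x$.

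First I would fix a root vertex $v_0$, set $y_{v_0}=1$, and extend $y$ to all of $V(H)$ along walks by the rule that whenever $v$ and $w$ lie in a common edge $e$,
\[
\frac{y_w}{y_v}=\left(\frac{B(v,e)}{B(w,e)}\right)^{\!1/r}.
\]
Connectedness guarantees every vertex is reachable from $v_0$, so this assigns a positive value to each $y_w$; the real content is that this value is independent of the chosen walk. I expect this well-definedness to be the main obstacle, and it is exactly where consistency enters: around any cycle $v_0e_1v_1\cdots e_{\ell}v_{\ell}$ with $v_{\ell}=v_0$ the accumulated ratio equals $\bigl(\prod_{i=1}^{\ell}B(v_{i-1},e_i)/B(v_i,e_i)\bigr)^{1/r}$, and this is $1$ because the consistency condition of Definition \ref{defn:alpha-normal} forces $\prod_{i=1}^{\ell}B(v_i,e_i)/B(v_{i-1},e_i)=1$, hence its reciprocal as well. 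Thus $y$ is a well-defined positive vector, and by construction the quantity $B(u,e)^{1/r}y_u$ is constant over $u\in e$ for every edge $e$; denote this common value by $c_e$.

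Next I would verify that $y$ solves the eigenequation with eigenvalue $\alpha^{-1/r}$. Fixing a vertex $v$ and an edge $e\ni v$, I would substitute $y_u=c_eB(u,e)^{-1/r}$ and use $\prod_{u\in e}B(u,e)=\alpha$ together with $c_e=B(v,e)^{1/r}y_v$ to obtain $\prod_{u\in e\setminus\{v\}}y_u=\alpha^{-1/r}B(v,e)\,y_v^{r-1}$. Summing over all edges through $v$ and applying the normalization $\sum_{e:v\in e}B(v,e)=1$ then yields $(\mathcal{A}(H)y)_v=\alpha^{-1/r}y_v^{r-1}$, that is, $\mathcal{A}(H)y=\alpha^{-1/r}y^{[r-1]}$. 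This is a short computation once the two defining conditions of $\alpha$-normality are inserted in the right places.

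Finally, since $H$ is connected, the Perron--Frobenius theorem for nonnegative (weakly irreducible) tensors guarantees that the positive eigenvector attached to $\rho(H)=\alpha^{-1/r}$ is unique up to scaling, so $x=y/\|y\|_r$. The relation $B(u,e)^{1/r}y_u=c_e$ is unaffected by positive scaling, whence $B(v_{i_1},e)^{1/r}x_{v_{i_1}}=\cdots=B(v_{i_r},e)^{1/r}x_{v_{i_r}}$ for every edge $e=\{v_{i_1},\ldots,v_{i_r}\}$, which is the claim.
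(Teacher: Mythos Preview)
Your argument is correct, but it is genuinely different from the paper's. The paper works from the principal eigenvector $x$ outward: it writes $\rho=x^{\mathrm{T}}(\mathcal{A}(H)x)$, inserts the factor $\alpha^{-1/r}=\prod_{v\in e}B(v,e)^{-1/r}$ into each edge term, applies AM--GM to $\prod_{v\in e}\bigl(B(v,e)^{1/r}x_v\bigr)$, and then uses the two normalization identities of $\alpha$-normality to collapse the upper bound back to $\alpha^{-1/r}=\rho$. Equality being forced in AM--GM immediately yields \eqref{eq:B(v,e) is constant}. Consistency is used only implicitly, through the appeal to Lemma~\ref{lem:StrictlySubnormal} for $\rho=\alpha^{-1/r}$.

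Your route goes in the opposite direction: you build the eigenvector from $B$, using consistency explicitly to make the construction well defined, verify the eigenequation by the two $\alpha$-normality identities, and then invoke Perron--Frobenius uniqueness. This is essentially the ``sufficiency'' half of Lemma~\ref{lem:StrictlySubnormal} carried out in full, with the desired identity obtained as a built-in feature of the constructed vector rather than as an equality case. The paper's proof is shorter and avoids any path/cycle bookkeeping; your proof is more constructive and makes transparent exactly where consistency is needed. One small point worth stating if you write it up: your well-definedness step needs the product of ratios to be $1$ around every closed \emph{walk}, whereas Definition~\ref{defn:alpha-normal} states consistency only for cycles; the reduction from closed walks to cycles is standard (decompose at repeated vertices, and note that immediate backtracks contribute $1$), but it should be mentioned.
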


\begin{proof}
Let $\rho$ be the spectral radius of $H$. From Lemma \ref{lem:StrictlySubnormal}, $H$ is 
consistently $\alpha$-normal with $\alpha=\rho^{-r}$. According to Theorem \ref{relaigh} 
and Definition \ref{defn:alpha-normal}, we have
\begin{align}
\label{eq:equality hold}
\rho=x^{\mathrm{T}}\left(\mathcal{A}(H)x\right) & 
=r\sum_{e=\{v_{i_1},v_{i_2},\cdots,v_{i_r}\}\in E(H)}x_{v_{i_1}}x_{v_{i_2}}\cdots x_{v_{i_r}} \notag\\
& =\frac{r}{\alpha^{\frac{1}{r}}}\sum_{e\in E(H)}\prod_{v_i:v_i\in e}
\left(B(v_i,e)^{\frac{1}{r}}x_{v_i}\right) \notag\\
& \leqslant\frac{r}{\alpha^{\frac{1}{r}}}\sum_{e\in E(H)}
\frac{\sum_{v_i:v_i\in e}B(v_i,e)x_{v_i}^r}{r} \\
& =\alpha^{-\frac{1}{r}}=\rho, \notag
\end{align}
which implies that in \eqref{eq:equality hold} equality holds. Therefore the desired 
equations \eqref{eq:B(v,e) is constant} follows. 
\end{proof}

\begin{theorem}
\label{thm:x_i upper bounds}
Let $H$ be a connected $r$-uniform linear hypergraph on $n$ vertices with spectral radius $\rho$ and 
principal eigenvector $x$. For any $v\in V(H)$, if $d$ is the degree of vertex $v$, then
\[
x_v\leqslant
\frac{1}{\sqrt[\leftroot{-2}\uproot{8}r]{1+(r-1)\left(\frac{\rho^r}{d}\right)^{\frac{1}{r-1}}}}.
\]
\end{theorem}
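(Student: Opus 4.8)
The plan is to combine the eigen-equation at $v$ with the normalization $\|x\|_r=1$, using linearity to prevent over-counting of neighbours. First I would write out the $v$-th coordinate of $\mathcal{A}(H)x=\rho\,x^{[r-1]}$ explicitly. By the definition of the adjacency tensor each edge $e\ni v$ contributes $(r-1)!$ equal ordered terms, so the equation reduces to
\[
\rho\,x_v^{r-1}=\sum_{e:v\in e}\prod_{u\in e\setminus\{v\}}x_u=:\sum_{e:v\in e}P_e,
\]
where $P_e$ is the product of the entries over the $r-1$ vertices of $e$ other than $v$, and there are exactly $d$ such edges.

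Next I would convert the normalization into a lower bound for $1-x_v^r$. Since $\sum_i x_i^r=1$, we have $1-x_v^r=\sum_{u\neq v}x_u^r$. Here linearity is the essential structural input: because any two edges through $v$ meet only at $v$, the vertex sets $e\setminus\{v\}$ over the edges $e\ni v$ are pairwise disjoint, so summing $x_u^r$ over all these vertices counts each neighbour exactly once and cannot exceed $\sum_{u\neq v}x_u^r$. This gives $1-x_v^r\geqslant\sum_{e:v\in e}\sum_{u\in e\setminus\{v\}}x_u^r$.

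The two remaining steps are convexity estimates. On each edge I would apply the AM--GM inequality to the $r-1$ numbers $x_u^r$ with $u\in e\setminus\{v\}$, obtaining $\sum_{u\in e\setminus\{v\}}x_u^r\geqslant(r-1)P_e^{r/(r-1)}$. Summing over the $d$ edges and applying Jensen's inequality to the convex map $t\mapsto t^{r/(r-1)}$ (convex as $r/(r-1)>1$), together with the eigen-equation $\sum_{e:v\in e}P_e=\rho\,x_v^{r-1}$, yields
\[
\sum_{e:v\in e}P_e^{r/(r-1)}\geqslant d\Bigl(\tfrac{1}{d}\sum_{e:v\in e}P_e\Bigr)^{r/(r-1)}=\Bigl(\tfrac{\rho^r}{d}\Bigr)^{1/(r-1)}x_v^r.
\]
Chaining the three displays produces $1-x_v^r\geqslant(r-1)\bigl(\rho^r/d\bigr)^{1/(r-1)}x_v^r$, and solving for $x_v$ gives precisely the stated bound.

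I expect the main subtlety to be the second step: the passage from $\sum_{e:v\in e}\sum_{u\in e\setminus\{v\}}x_u^r$ to $\sum_{u\neq v}x_u^r$ relies crucially on the hypothesis that $H$ is \emph{linear}, without which a neighbour could be counted several times and the inequality could break down. The AM--GM and Jensen estimates are routine once the quantities are set up, so the real content lies in organising the eigen-equation and the normalization around the disjointness of the edge-neighbourhoods of $v$.
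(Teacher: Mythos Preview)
Your argument is correct. The ingredients are the same as the paper's---AM--GM on each edge through $v$, a convexity (power-mean) estimate to aggregate over the $d$ edges, and linearity to ensure the neighbours $e\setminus\{v\}$ are disjoint so that $\sum_{e\ni v}\sum_{u\in e\setminus\{v\}}x_u^r\leqslant 1-x_v^r$. The difference is one of packaging: the paper routes the computation through the Lu--Man weighted incidence matrix $B$, using the identity $B(w,e)^{1/r}x_w=\text{const}$ on each edge and the normalisation $\sum_{e\ni v}B(v,e)=1$, whereas you work directly from the eigen-equation $\rho x_v^{r-1}=\sum_{e\ni v}P_e$. These are the same calculation in disguise, since in fact $B(v,e)=P_e/(\rho x_v^{r-1})$, so the paper's H\"older step on the $B(v,e_i)$ is exactly your Jensen step on the $P_{e_i}$. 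Your version has the advantage of being self-contained (it does not invoke Lemma~3.1 or the $\alpha$-normal framework), while the paper's version illustrates the utility of that framework, which it reuses in later proofs.
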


\begin{proof}
Let $e_1$, $e_2$, $\ldots$, $e_d$ be all distinct edges containing $v$. Denote
\[
e_i=\{v,v_{i_1},v_{i_2},\ldots,v_{i_{r-1}}\},~~i=1,2,\ldots,d.
\]
Suppose that $H$ is consistently $\alpha$-normal with weighted incidence matrix $B$, 
where $\alpha=\rho^{-r}$. For each edge $e_i$, by \eqref{eq:B(v,e) is constant} and 
AM-GM inequality we have
\medmuskip=1mu
\begin{align*}
\frac{x_{v_{i_1}}^r+x_{v_{i_2}}^r+\cdots +x_{v_{i_{r-1}}}^r}{x_{v}^r} & =
\frac{B(v,e_i)}{B(v_{i_1},e_i)}+\frac{B(v,e_i)}{B(v_{i_2},e_i)}+\cdots+
\frac{B(v,e_i)}{B(v_{i_{r-1}},e_i)}\\
& \geqslant\frac{(r-1)B(v,e_i)}{\sqrt[r-1]{B(v_{i_1},e_i)\cdots B(v_{i_{r-1}},e_i)}}\\
& =\frac{(r-1)B(v,e_i)^{\frac{r}{r-1}}}{\sqrt[r-1]{B(v,e_i)B(v_{i_1},e_i)\cdots B(v_{i_{r-1}},e_i)}}\\
& =\frac{(r-1)B(v,e_i)^{\frac{r}{r-1}}}{\alpha^{\frac{1}{r-1}}}\\
& =(r-1)\rho^{\frac{r}{r-1}}B(v,e_i)^{\frac{r}{r-1}}.
\end{align*}
It follows from H\"older inequality that
\begin{align*}
\sum_{i=1}^{d}\sum_{j=1}^{r-1}x_{v_{i_j}}^r & \geqslant
(r-1)\rho^{\frac{r}{r-1}}x_{v}^r\cdot\sum_{i=1}^dB(v,e_i)^{\frac{r}{r-1}}\\
& =(r-1)\rho^{\frac{r}{r-1}}x_{v}^r\cdot
\frac{\left[\left(\sum_{i=1}^{d}B(v,e_i)^{\frac{r}{r-1}}\right)^{\frac{r-1}{r}}\cdot
\left(\sum_{i=1}^{d}1^r\right)^{\frac{1}{r}}\right]^{\frac{r}{r-1}}}{d^{\frac{1}{r-1}}}\\
& \geqslant(r-1)\rho^{\frac{r}{r-1}}x_v^r\cdot
\frac{\left(\sum_{i=1}^dB(v,e_i)\right)^{\frac{r}{r-1}}}{d^{\frac{1}{r-1}}}\\
& = (r-1)\left(\frac{\rho^r}{d}\right)^{\frac{1}{r-1}}x_v^r.
\end{align*}
Notice that $\sum_{j=1}^nx_j^r=1$ and $H$ is linear, we obtain
\begin{align*}
1=\sum_{j=1}^nx_j^r & \geqslant x_v^r+\sum_{i=1}^d\sum_{j=1}^{r-1}x_{v_{i_j}}^r\\
& \geqslant x_v^r+(r-1)\left(\frac{\rho^r}{d}\right)^{\frac{1}{r-1}}x_v^r\\
& =\left[1+(r-1)\left(\frac{\rho^r}{d}\right)^{\frac{1}{r-1}}\right]x_v^r.
\end{align*}
Therefore we have 
\[
x_v\leqslant
\frac{1}{\sqrt[\leftroot{-2}\uproot{8}r]{1+(r-1)\left(\frac{\rho^r}{d}\right)^{\frac{1}{r-1}}}}.
\]

The proof is completed.
\end{proof}

It is known that $\rho(H)\geqslant\sqrt[r]{\Delta(H)}$ from \cite[Proposition 7.13]{Nikiforov-3},
so we get the following corollary immediately.

\begin{corollary}
Let $H$ be a connected $r$-uniform linear hypergraph on $n$ vertices with principal 
eigenvector $x=(x_1,x_2,\ldots,x_n)^{\mathrm{T}}$. Then
\[
x_i\leqslant\frac{1}{\sqrt[r]{r}}.
\]
\end{corollary}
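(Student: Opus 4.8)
The plan is to deduce the corollary directly from Theorem~\ref{thm:x_i upper bounds} by bounding the denominator of that estimate from below by $\sqrt[r]{r}$, uniformly over all vertices. First I would fix an arbitrary vertex $i\in V(H)$ and let $d=d_H(i)$ denote its degree. Applying Theorem~\ref{thm:x_i upper bounds} to this vertex yields
\[
x_i\leqslant\frac{1}{\sqrt[r]{1+(r-1)\bigl(\rho^r/d\bigr)^{\frac{1}{r-1}}}}.
\]

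The key step is to control the quantity $\rho^r/d$ appearing inside the radical. Here I would invoke the cited lower bound $\rho(H)\geqslant\sqrt[r]{\Delta(H)}$ from \cite[Proposition 7.13]{Nikiforov-3}, which upon raising both sides to the $r$-th power gives $\rho^r\geqslant\Delta(H)$. Since $\Delta(H)$ is the maximum degree and $d$ is the degree of a specific vertex, we have $d\leqslant\Delta(H)\leqslant\rho^r$, so that $\rho^r/d\geqslant 1$ and consequently $\bigl(\rho^r/d\bigr)^{1/(r-1)}\geqslant 1$.

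With this in hand the conclusion is immediate: the denominator satisfies
\[
1+(r-1)\bigl(\rho^r/d\bigr)^{\frac{1}{r-1}}\geqslant 1+(r-1)=r,
\]
and therefore $x_i\leqslant 1/\sqrt[r]{r}$. Because the vertex $i$ was chosen arbitrarily, this bound holds for every component of the principal eigenvector, which is exactly the assertion.

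I do not expect any genuine obstacle here, as the statement is an immediate specialization of the preceding theorem. The only point requiring care is the chain $d\leqslant\Delta(H)\leqslant\rho^r$, but both inequalities are essentially definitional once the lower bound on $\rho$ is granted: the first is the definition of maximum degree and the second is the quoted proposition. The linearity hypothesis on $H$ is inherited from Theorem~\ref{thm:x_i upper bounds} and is used only implicitly through that theorem.
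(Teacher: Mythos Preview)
Your proposal is correct and follows exactly the paper's approach: apply Theorem~\ref{thm:x_i upper bounds} and use $\rho^r\geqslant\Delta(H)\geqslant d$ (the former from \cite[Proposition~7.13]{Nikiforov-3}) to bound the denominator below by $r$. The paper states this as an immediate consequence without writing out the details you supply.
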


When $r=2$, we can obtain the following classical result, which was also proved 
in \cite{Sebastian:Principal eigenvectors}. 

\begin{corollary}
[\cite{Sebastian:Principal eigenvectors}]
\label{coro:x<res2}
Suppose that $G$ is a connected graph on $n$ vertices with spectral radius $\rho$ and 
principal eigenvector $x$. For any $v\in V(G)$, if $d$ is the degree of vertex $v$, then
\[
x_v\leqslant\frac{1}{\sqrt{1+\frac{\rho^2}{d}}}.
\]
\end{corollary}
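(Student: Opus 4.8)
The plan is to obtain this as the special case $r=2$ of Theorem \ref{thm:x_i upper bounds}. The first step is to check that the hypotheses of that theorem apply to an arbitrary simple connected graph $G$. A simple graph is exactly a $2$-uniform hypergraph, and it is automatically linear, since two distinct edges of a simple graph share at most one vertex; thus no extra assumption is needed. Under this identification the adjacency tensor $\mathcal{A}(G)$ of order $2$ is the ordinary adjacency matrix, the principal eigenvector is the usual Perron eigenvector normalized by $\|x\|_2=1$, and $\rho$ is the classical spectral radius $\rho(G)$. Hence Theorem \ref{thm:x_i upper bounds} is directly available at every vertex $v$ of $G$.

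The second step is a purely arithmetic substitution. Setting $r=2$ in the bound
\[
x_v\leqslant\frac{1}{\sqrt[r]{\,1+(r-1)\bigl(\rho^r/d\bigr)^{1/(r-1)}}}
\]
makes every exponent collapse: the factor $r-1$ becomes $1$, the inner exponent $1/(r-1)$ becomes $1$ so that $\bigl(\rho^2/d\bigr)^{1}=\rho^2/d$, and the outer $r$-th root becomes a square root. The right-hand side therefore simplifies to $\bigl(1+\rho^2/d\bigr)^{-1/2}$, which is precisely the asserted inequality.

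I expect no real obstacle here, as the statement is a formal corollary of Theorem \ref{thm:x_i upper bounds}. The only point meriting explicit mention is the verification in the first step that the tensor framework specializes correctly to the matrix setting and that linearity is automatic when $r=2$; once this is granted, the one-line substitution completes the argument.
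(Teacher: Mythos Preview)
Your proposal is correct and matches the paper's approach exactly: the corollary is stated immediately after Theorem \ref{thm:x_i upper bounds} with the remark that it follows by taking $r=2$, and the paper offers no further argument. Your explicit checks that a simple graph is a linear $2$-uniform hypergraph and that the tensor framework specializes to the ordinary adjacency matrix are precisely the points one needs, and the arithmetic substitution is straightforward.
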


\begin{remark}
In Theorem \ref{thm:x_i upper bounds}, we give an upper bound on $x_v$ for a connected 
$r$-uniform linear hypergraph. With the same symbols as that of Theorem \ref{thm:x_i upper bounds},
Nikiforov presented a bound for general uniform hypergraph as follows \cite{Nikiforov-3}. 
For $r$-uniform hypergraph it is proved that
\begin{equation}
\label{eq:Nikiforov}
x_v\leqslant\sqrt[\leftroot{-2}\uproot{17}r]{\frac{d}{[\rho^r(r-1)!]^{\frac{1}{r-1}}}}.
\end{equation}

Notice that 
\[
1+(r-1)\left(\frac{\rho^r}{d}\right)^{\frac{1}{r-1}}>
(r-1)\left(\frac{\rho^r}{d}\right)^{\frac{1}{r-1}}\geqslant
\frac{[\rho^r(r-1)!]^{\frac{1}{r-1}}}{d}.
\]
Therefore when $H$ is a connected $r$-uniform linear hypergraph, Theorem \ref{thm:x_i upper bounds}
has a better bound than \eqref{eq:Nikiforov}. 
\end{remark}

In the following we give an upper bound on $x_{\min}$ for a connected $r$-uniform hypergraph,
which extends the result of Nikiforov \cite{Nikiforov-2} to linear hypergraphs.
\begin{theorem}
Suppose that $H$ is a connected $r$-uniform linear hypergraph on $n$ vertices with spectral radius 
$\rho$ and the principal eigenvector $x$. Let $\delta$ be the minimum vertex degree of $H$, then
\[
x_{\min}\leqslant\frac{1}{\sqrt[\leftroot{-2}\uproot{7}r]{(r-1)\left(\frac{\rho^r}{\delta}\right)^{\frac{1}{r-1}}+n-\delta(r-1)}}.
\]
\end{theorem}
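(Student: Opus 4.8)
The plan is to reuse, almost verbatim, the AM--GM and Hölder chain already established in the proof of Theorem \ref{thm:x_i upper bounds}, but to apply it at a vertex of \emph{minimum} degree and to measure the resulting lower bound against $x_{\min}$ rather than against the entry at that vertex. First I would fix a vertex $v$ with $d_H(v)=\delta$ and list its incident edges $e_1,\ldots,e_\delta$, writing $e_i=\{v,v_{i_1},\ldots,v_{i_{r-1}}\}$. Taking $H$ to be consistently $\alpha$-normal with $\alpha=\rho^{-r}$ and weighted incidence matrix $B$, I would run the same edge-by-edge AM--GM step followed by the Hölder step as in Theorem \ref{thm:x_i upper bounds}, now with $d=\delta$, to obtain
\[
\sum_{i=1}^{\delta}\sum_{j=1}^{r-1}x_{v_{i_j}}^r \geqslant (r-1)\left(\frac{\rho^r}{\delta}\right)^{\frac{1}{r-1}}x_v^r .
\]
Since $x_v\geqslant x_{\min}$, the right-hand side is in turn at least $(r-1)(\rho^r/\delta)^{1/(r-1)}x_{\min}^r$.

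Next I would split the normalization $\sum_{j=1}^n x_j^r=1$ according to proximity to $v$. Because $H$ is linear, the edges $e_1,\ldots,e_\delta$ pairwise meet only in $v$, so the $\delta(r-1)$ vertices $v_{i_j}$ are pairwise distinct and distinct from $v$; in particular $n\geqslant 1+\delta(r-1)$, so the number $n-1-\delta(r-1)$ of remaining vertices is nonnegative. This lets me write the exact decomposition
\[
1 = x_v^r + \sum_{i=1}^{\delta}\sum_{j=1}^{r-1}x_{v_{i_j}}^r + \sum_{w} x_w^r ,
\]
where the last sum ranges over the $n-1-\delta(r-1)$ vertices outside $\{v\}\cup\{v_{i_j}\}$.

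I would then bound $x_v^r\geqslant x_{\min}^r$ and $x_w^r\geqslant x_{\min}^r$ for each remaining $w$, and substitute the lower bound for the middle double sum, which gives
\[
1 \geqslant \left[(r-1)\left(\frac{\rho^r}{\delta}\right)^{\frac{1}{r-1}} + n-\delta(r-1)\right] x_{\min}^r ;
\]
solving for $x_{\min}$ yields the stated inequality.

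The only genuine obstacle is the bookkeeping of the vertex partition. I must invoke linearity precisely to guarantee that the neighbour entries are counted without repetition, so that the double sum equals $\sum_{u\in N(v)}x_u^r$ and the three groups in the decomposition are disjoint with total count $n$; without linearity the double sum could overcount and the identity would fail. I should also emphasize why a minimum-degree vertex is the right choice: the bracket $(r-1)(\rho^r/d)^{1/(r-1)}+n-d(r-1)$ is decreasing in $d$, so taking $d=\delta$ makes the bracket largest and hence the upper bound on $x_{\min}$ sharpest. Every inequality beyond this counting is a direct transcription of the ones already proved for Theorem \ref{thm:x_i upper bounds}.
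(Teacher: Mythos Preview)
Your proposal is correct and follows essentially the same route as the paper: apply the AM--GM/H\"older estimate of Theorem~\ref{thm:x_i upper bounds} at a vertex of minimum degree, use linearity so the $\delta(r-1)$ neighbours are distinct, and combine with $\sum_j x_j^r=1$ and $x_w\geqslant x_{\min}$. The only cosmetic difference is that the paper rearranges the same inequalities as $x_{\min}^r\leqslant x_u^r\leqslant \bigl(1-[n-\delta(r-1)]x_{\min}^r\bigr)\big/\bigl((r-1)(\rho^r/\delta)^{1/(r-1)}\bigr)$ rather than bounding each block of the three-part decomposition from below as you do; the resulting inequality is identical.
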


\begin{proof}
Let $u\in V(H)$ be a vertex of minimum degree $d(u)=\delta$. Let $e_1$, $e_2$, $\ldots$, $e_{\delta}$ 
be all distinct edges containing $u$. Denote
\[
e_i=\{u,u_{i_1},u_{i_2},\ldots,u_{i_{r-1}}\},~i = 1,2,\ldots,\delta.
\]
Similarly to the proof in Theorem \ref{thm:x_i upper bounds}, we have
\[
\sum_{i=1}^{\delta}\sum_{j=1}^{r-1}x_{u_{i_j}}^r\geqslant
(r-1)\left(\frac{\rho^r}{\delta}\right)^{\frac{1}{r-1}}x_u^r.
\]
It follows from $\sum_{i=1}^nx_i^r=1$ that 
\[
x_{\min}^r \leqslant x_u^r\leqslant
\frac{\sum_{i=1}^{\delta}\sum_{j=1}^{r-1}x_{u_{i_j}}^r}{(r-1)\left(\frac{\rho^r}{\delta}\right)^{\frac{1}{r-1}}}
\leqslant \frac{1-[n-\delta(r-1)]x_{\min}^r}{(r-1)\left(\frac{\rho^r}{\delta}\right)^{\frac{1}{r-1}}},
\]
which implies that
\[
\left[(r-1)\left(\frac{\rho^r}{\delta}\right)^{\frac{1}{r-1}}+n-\delta(r-1)\right]
x_{\min}^r\leqslant 1.
\]

This completes the proof of this theorem.
\end{proof}

If we take $r=2$ in the above theorem, we obtain the following classical result.
\begin{corollary}
[\cite{Nikiforov-2}]
Suppose that $G$ is a connected graph on $n$ vertices with spectral radius $\rho$ and the principal 
eigenvector $x$. Let $\delta$ be the minimum vertex degree of $G$, then
\[
x_{\min}\leqslant\sqrt{\frac{\delta}{\rho^2+\delta(n-\delta)}}.
\]
\end{corollary}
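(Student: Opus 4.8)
The plan is to mirror the argument of Theorem~\ref{thm:x_i upper bounds}, but to anchor it at a vertex of minimum degree and then extract a \emph{global} inequality from the normalization $\sum_{i=1}^{n}x_i^r=1$ together with the linearity of $H$. First I would fix a vertex $u$ with $d(u)=\delta$ and enumerate the edges through it as $e_1,\ldots,e_\delta$, writing $e_i=\{u,u_{i_1},\ldots,u_{i_{r-1}}\}$. Since the goal is only an upper bound on $x_{\min}$ and trivially $x_u\geqslant x_{\min}$, it is enough to bound $x_u$ from above; the point of choosing minimum degree is precisely that it forces the smallest (hence most favourable) constant $\delta$ to appear in the denominator.

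The core estimate is
\[
\sum_{i=1}^{\delta}\sum_{j=1}^{r-1}x_{u_{i_j}}^r\geqslant(r-1)\left(\frac{\rho^r}{\delta}\right)^{\frac{1}{r-1}}x_u^r,
\]
which I would obtain by repeating verbatim the computation from the proof of Theorem~\ref{thm:x_i upper bounds}. Concretely: realize $H$ as consistently $\alpha$-normal with $\alpha=\rho^{-r}$ via Lemma~\ref{lem:StrictlySubnormal}, use \eqref{eq:B(v,e) is constant} to rewrite each ratio $x_{u_{i_j}}^r/x_u^r$ as $B(u,e_i)/B(u_{i_j},e_i)$, apply the AM--GM inequality across the $r-1$ terms coming from a single edge, and finish with H\"older's inequality summed over the $\delta$ edges. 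Nothing in that derivation used any special feature of the degree, so it transfers unchanged with $d$ replaced by $\delta$.

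The genuinely new ingredient is a counting step, and this is where linearity is used. Because any two edges of a linear hypergraph meet in at most one vertex and $e_1,\ldots,e_\delta$ all contain $u$, the $\delta(r-1)$ vertices $u_{i_j}$ are pairwise distinct and none equals $u$. Thus the vertex set splits into these $\delta(r-1)$ neighbours and a complementary set of $n-\delta(r-1)$ vertices (which includes $u$ itself), every one of the latter having coordinate at least $x_{\min}$. Feeding this into $\sum_{i=1}^{n}x_i^r=1$ shows that $\sum_{i=1}^{\delta}\sum_{j=1}^{r-1}x_{u_{i_j}}^r$ equals $1$ minus the sum of $x_v^r$ over the complementary set, and hence is at most $1-[n-\delta(r-1)]x_{\min}^r$.

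Combining this with the core estimate and with $x_{\min}^r\leqslant x_u^r$ yields
\[
x_{\min}^r\leqslant\frac{1-[n-\delta(r-1)]x_{\min}^r}{(r-1)\left(\frac{\rho^r}{\delta}\right)^{\frac{1}{r-1}}};
\]
clearing the denominator, collecting the two $x_{\min}^r$ contributions, and taking $r$-th roots then produces the stated bound. I expect the only real obstacle to be the bookkeeping in the counting step: one has to be sure that linearity really does guarantee exactly $\delta(r-1)$ distinct neighbours (so that no coordinate is double-counted in the edge sum) and that the complementary count $n-\delta(r-1)$ correctly absorbs $u$, because it is exactly these two facts that pin down the constants $(r-1)(\rho^r/\delta)^{\frac{1}{r-1}}$ and $n-\delta(r-1)$ in the final expression.
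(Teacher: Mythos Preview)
Your proposal is correct and follows exactly the paper's approach: the paper proves the general $r$-uniform linear result (the theorem immediately preceding this corollary) by the very argument you describe---anchor at a minimum-degree vertex $u$, invoke the edge-by-edge estimate from Theorem~\ref{thm:x_i upper bounds} to get $\sum_{i,j}x_{u_{i_j}}^r\geqslant(r-1)(\rho^r/\delta)^{1/(r-1)}x_u^r$, then use linearity together with $\sum_i x_i^r=1$ to bound that sum by $1-[n-\delta(r-1)]x_{\min}^r$---and obtains the corollary simply by setting $r=2$. The only remark to add is that for ordinary simple graphs linearity is automatic, so the counting step you flag as the potential obstacle is entirely routine in the $r=2$ case.
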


\subsection{The ratio of entries in principal eigenvector and the principal ratio $\gamma$}

In this subsection we shall consider the bounds of the ratio $x_i/x_j$ for $i$, 
$j\in [n]$ as well as the principal ratio $\gamma(H)$ of a connected $r$-uniform 
hypergraph $H$.
\begin{theorem}
\label{thm:x_u/x_v-1}
Suppose that $H$ is a connected $r$-uniform hypergraph on $n$ vertices with 
spectral radius $\rho$ and the principal eigenvector $x$. For $u$, 
$v\in V(H)$, if $d(u,v)=\ell$, then
\begin{equation}
\label{eq:x_u/x_v<rho^l}
\frac{1}{\rho^{\ell}}\leqslant\frac{x_u}{x_v}\leqslant \rho^{\ell}.
\end{equation}
\end{theorem}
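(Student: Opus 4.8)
The plan is to reduce the general distance-$\ell$ estimate to the special case of two vertices lying in a common edge, and then to chain that estimate along a shortest $u$–$v$ path. By Lemma~\ref{lem:StrictlySubnormal}, the connected hypergraph $H$ is consistently $\alpha$-normal with $\alpha=\rho^{-r}$, so I would fix a weighted incidence matrix $B$ realizing this and invoke the relation \eqref{eq:B(v,e) is constant} from the preceding lemma.

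First I would settle the base case $\ell=1$, i.e.\ $u$ and $v$ both incident to a common edge $e$. Then \eqref{eq:B(v,e) is constant} gives $B(u,e)^{1/r}x_u=B(v,e)^{1/r}x_v$, hence
\[
\frac{x_u}{x_v}=\left(\frac{B(v,e)}{B(u,e)}\right)^{1/r}.
\]
The two normalization conditions of Definition~\ref{defn:alpha-normal} control the range of the weights: condition~(1) forces $B(w,e)\le 1$ for every $w\in e$ (each positive summand of a sum equal to $1$), while condition~(2) combined with these upper bounds yields $B(w,e)=\alpha\big/\prod_{w'\in e,\,w'\neq w}B(w',e)\ge\alpha=\rho^{-r}$. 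Thus $\rho^{-r}\le B(w,e)\le 1$ for each $w\in e$, so $B(v,e)/B(u,e)\in[\rho^{-r},\rho^{r}]$, and taking $r$-th roots gives $\rho^{-1}\le x_u/x_v\le\rho$, which is \eqref{eq:x_u/x_v<rho^l} for $\ell=1$.

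For general $\ell$ I would take a shortest path $u=w_0,e_1,w_1,\ldots,e_{\ell},w_{\ell}=v$ realizing $d(u,v)=\ell$. Each consecutive pair $w_{k-1},w_k$ is incident to the common edge $e_k$, so the base case applies and $\rho^{-1}\le x_{w_{k-1}}/x_{w_k}\le\rho$ for every $k$. Multiplying these inequalities over $k=1,\ldots,\ell$, the middle product telescopes to $x_{w_0}/x_{w_{\ell}}=x_u/x_v$, while the outer bounds accumulate to $\rho^{-\ell}$ and $\rho^{\ell}$, giving exactly \eqref{eq:x_u/x_v<rho^l}.

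The only genuinely delicate point is the uniform two-sided bound $\rho^{-r}\le B(w,e)\le 1$ on the incidence weights; the telescoping along the path is then routine bookkeeping. I would pay particular attention to the lower bound, since it is the step that essentially uses the product normalization (2) and is what converts a ratio of weights into the clean factor $\rho^{\pm r}$.
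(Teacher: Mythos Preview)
Your proof is correct and follows essentially the same route as the paper: fix a consistently $\alpha$-normal weighted incidence matrix, use \eqref{eq:B(v,e) is constant} to control the ratio $x_{w_{k-1}}/x_{w_k}$ along each edge of a shortest path, and telescope. The only cosmetic difference is that you bound each weight individually by $\alpha\le B(w,e)\le 1$, whereas the paper bounds the product $B(u_{i-1},e_i)B(u_i,e_i)\ge\alpha$; both yield the same single-step estimate $\rho^{-1}\le x_{w_{k-1}}/x_{w_k}\le\rho$.
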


\begin{proof}
It is sufficient to prove the left-hand side of \eqref{eq:x_u/x_v<rho^l}. From Lemma 
\ref{lem:StrictlySubnormal}, we assume that $H$ is consistently $\alpha$-normal with 
weighted incidence matrix $B$. Let $u=u_0e_1u_1\cdots e_{\ell}u_{\ell}=v$ be a shortest 
path in $H$ from $u$ to $v$. For edge $e_i$, $i=1$, $2$, $\ldots$, $\ell$, by 
\eqref{eq:B(v,e) is constant}, we have
\begin{equation}
\label{eq:x_{u_i}B(u_i,e_i)}
x_{u_{i-1}}\cdot B(u_{i-1},e_i)^{\frac1r}=x_{u_i}\cdot B(u_i,e_i)^{\frac1r}.
\end{equation}
According to Definition \ref{defn:alpha-normal}, we see
\[
\prod_{w:w\in e_i}B(w,e_i)=\alpha~~\text{and}~~0<B(w,e_i)\leqslant 1,~
i=1,2,\ldots,\ell.
\]
Therefore we deduce that
\[
B(u_{i-1},e_i)B(u_i,e_i)\geqslant\alpha.
\]
It follows from \eqref{eq:x_{u_i}B(u_i,e_i)} that
\begin{align*}
x_{u_{i-1}}^2 & \geqslant x_{u_{i-1}}^2\cdot B(u_{i-1},e_i)^{\frac2r}\\
& =x_{u_{i-1}}x_{u_i}\cdot \left[B(u_{i-1},e_i)B(u_i,e_i)\right]^{\frac1r}\\
& \geqslant\alpha^{\frac1r}x_{u_{i-1}}x_{u_i}\\
& =\frac{x_{u_{i-1}}x_{u_i}}{\rho},
\end{align*}
which yields that
\[
\frac{x_{u_{i-1}}}{x_{u_i}}\geqslant\frac{1}{\rho},~i=1,2,\ldots,\ell.
\]
Therefore we deduce that  
\[
\frac{x_u}{x_v}=\frac{x_{u_0}}{x_{u_1}}\cdot\frac{x_{u_1}}{x_{u_2}}\cdots
\frac{x_{u_{\ell-1}}}{x_v}\geqslant\frac{1}{\rho^{\ell}}.
\]

The proof is completed. 
\end{proof}

\begin{corollary}
\label{coro:l/rho<x_u/x_v<rho}
Let $H$ be a connected $r$-uniform hypergraph with spectral radius $\rho$ and the principal 
eigenvector $x$. If $u$, $v\in V(H)$ are adjacent, then
\[
\frac{1}{\rho}\leqslant\frac{x_u}{x_v}\leqslant\rho.
\]
\end{corollary}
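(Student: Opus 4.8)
The plan is to derive Corollary \ref{coro:l/rho<x_u/x_v<rho} as a direct specialization of Theorem \ref{thm:x_u/x_v-1}. The key observation is that adjacency of two vertices is simply the distance-one case: if $u$ and $v$ are adjacent, then by definition there is an edge $e$ with $\{u,v\}\subseteq e$, and this edge together with the two vertices forms a walk $uev$ of length $1$ in which all vertices and edges are distinct, hence a path of length $1$. Therefore $d(u,v)=1$, since no shorter path can connect two distinct vertices.

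Given this, the approach is immediate. First I would note that adjacency implies $d(u,v)=\ell=1$. Then I would invoke Theorem \ref{thm:x_u/x_v-1} with $\ell=1$, which gives
\[
\frac{1}{\rho^{1}}\leqslant\frac{x_u}{x_v}\leqslant\rho^{1},
\]
and since $\rho^1=\rho$, this is exactly the claimed bound $\frac{1}{\rho}\leqslant\frac{x_u}{x_v}\leqslant\rho$. No further estimation is required.

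There is essentially no obstacle here, as the corollary is a pure substitution into the already-established theorem; the only point deserving a line of justification is the claim that adjacent vertices are at distance exactly $1$, which follows because the existence of a common edge produces a path of length $1$ and distinct vertices cannot be at distance $0$. I would keep the proof to one or two sentences, remarking that the result follows by taking $\ell=1$ in \eqref{eq:x_u/x_v<rho^l}.
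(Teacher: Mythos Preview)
Your proposal is correct and matches the paper's approach: the corollary is stated immediately after Theorem~\ref{thm:x_u/x_v-1} with no separate proof, so it is intended as the direct specialization $\ell=1$ that you describe. Nothing further is needed.
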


If spectral radius $\rho(H)\geqslant\sqrt[r]{4}$, then we have the following stronger result.

\begin{theorem}
\label{thm:x_u/x_v}
Let $H$ be a connected $r$-uniform hypergraph with spectral radius $\rho$ and 
the principal eigenvector $x$. Let $u$, $v\in V(H)$, $d(u,v)=\ell$. Then 
the following statements hold.
\begin{enumerate}
\item If $\rho>\sqrt[r]{4}$, then 
\[
\left(\frac{\sigma-\sigma^{-1}}{\sigma^{\ell+1}-\sigma^{-(\ell+1)}}\right)^{\frac2r}
\leqslant\frac{x_u}{x_v}\leqslant 
\left(\frac{\sigma^{\ell+1}-\sigma^{-(\ell+1)}}{\sigma-\sigma^{-1}}\right)^{\frac2r},
\]
where
\[
\sigma=\frac12\left(\sqrt{\rho^r}+\sqrt{\rho^r-4}\right).
\]
\item If $\rho=\sqrt[r]{4}$, then 
\[
\frac{1}{\sqrt[r]{(\ell+1)^2}}\leqslant\frac{x_u}{x_v}\leqslant\sqrt[r]{(\ell+1)^2}.
\]
\end{enumerate}
\end{theorem}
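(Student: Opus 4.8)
The plan is to work with a consistently $\alpha$-normal weighted incidence matrix $B$ (Lemma \ref{lem:StrictlySubnormal}, with $\alpha=\rho^{-r}$) along a shortest $u$--$v$ path and to reduce the claim to a one-dimensional optimization governed by a Chebyshev-type recurrence. Fix a shortest path $u=u_0e_1u_1\cdots e_\ell u_\ell=v$ and abbreviate $f_i:=B(u_{i-1},e_i)$, $g_i:=B(u_i,e_i)$ and $X_i:=x_{u_i}^{r}$. Writing $s:=\sqrt{\rho^r}$, so that $\alpha=s^{-2}$ and $\sigma+\sigma^{-1}=s$, $\sigma\sigma^{-1}=1$, $\sigma-\sigma^{-1}=\sqrt{\rho^r-4}$, the relation \eqref{eq:B(v,e) is constant} on $e_i$ gives $f_iX_{i-1}=g_iX_i$, whence $(x_u/x_v)^{r}=X_0/X_\ell=\prod_{i=1}^{\ell}(g_i/f_i)$. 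Since the distance is symmetric, it suffices to prove the upper bound $x_u/x_v\le\bigl(\frac{\sigma^{\ell+1}-\sigma^{-(\ell+1)}}{\sigma-\sigma^{-1}}\bigr)^{2/r}$; applying it to the pair $(v,u)$ then yields the lower bound.

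Next I would extract the constraints forced by $\alpha$-normality (Definition \ref{defn:alpha-normal}). Because the product of $B(\cdot,e_i)$ over all $r$ vertices of $e_i$ equals $\alpha$ and every weight is at most $1$ by condition (1), the two path-weights obey the \emph{edge bound} $f_ig_i\ge\alpha$. Condition (1) applied at each interior vertex $u_i$, which lies in the distinct edges $e_i,e_{i+1}$, gives the \emph{vertex bound} $g_i+f_{i+1}\le1$, while the endpoints give $f_1\le1$ and $g_\ell\le1$. From $f_i\ge\alpha/g_i$ I obtain $g_i/f_i\le g_i^{2}/\alpha$, so $(x_u/x_v)^{r}\le\alpha^{-\ell}\prod_{i=1}^{\ell}g_i^{2}$, and moreover $g_i+\alpha/g_{i+1}\le g_i+f_{i+1}\le1$. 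Thus the whole problem reduces to showing that, subject to $g_i+\alpha/g_{i+1}\le1$ for $1\le i\le\ell-1$ and $g_\ell\le1$, the product $\prod_{i=1}^{\ell}g_i$ is maximized by the sequence with all constraints tight.

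The heart of the argument is this recurrence. Running the tight relations backward from $g_\ell=1$ via $g_i=1-\alpha/g_{i+1}$ and substituting $g_{\ell-j}=c_{j+1}/c_j$ linearizes them into $c_{j+1}=c_j-\alpha c_{j-1}$ with $c_0=c_1=1$; the characteristic roots are $\sigma/s$ and $\sigma^{-1}/s$, giving $c_j=s^{-j}\frac{\sigma^{j+1}-\sigma^{-(j+1)}}{\sigma-\sigma^{-1}}$. Since $t\mapsto1-\alpha/t$ is increasing, a downward induction shows $g_i\le c_{\ell-i+1}/c_{\ell-i}$ for every admissible sequence, so the telescoping product yields $\prod_{i=1}^{\ell}g_i\le c_\ell=s^{-\ell}\frac{\sigma^{\ell+1}-\sigma^{-(\ell+1)}}{\sigma-\sigma^{-1}}$. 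Feeding this into $(x_u/x_v)^{r}\le\alpha^{-\ell}\prod g_i^{2}$ and using $\alpha=s^{-2}$ collapses the powers of $s$ and produces exactly $\bigl(\frac{\sigma^{\ell+1}-\sigma^{-(\ell+1)}}{\sigma-\sigma^{-1}}\bigr)^{2}$, proving part (1). For part (2) the value $\rho=\sqrt[r]{4}$ forces $\sigma=1$, i.e.\ the repeated root $1/2$, so $c_j=(j+1)/2^{j}$ and the identical computation gives the bound $(\ell+1)^{2/r}$. I expect the main obstacle to be the monotone induction establishing the pointwise domination $g_i\le c_{\ell-i+1}/c_{\ell-i}$, together with checking positivity of the $c_j$, which holds precisely because $\rho^{r}\ge4$; once these are in place the Chebyshev telescoping finishes the proof cleanly.
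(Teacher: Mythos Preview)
Your proof is correct and rests on the same ingredients as the paper's: the consistently $\alpha$-normal weighted incidence matrix, the identity \eqref{eq:B(v,e) is constant} along a shortest path, the edge bound $f_ig_i\ge\alpha$, and the vertex bound $g_i+f_{i+1}\le1$. The packaging, however, is different. The paper works directly with the eigenvector entries, combining both bounds in one stroke to obtain the three-term linear recurrence
\[
\sqrt{x_{u_i}^r}\le\sqrt{\rho^r}\,\sqrt{x_{u_{i-1}}^r}-\sqrt{x_{u_{i-2}}^r},
\]
and then solves it via $2\times2$ matrix diagonalization. You instead reformulate the problem as maximizing $\prod_i g_i$ over the weight variables subject to $g_i+\alpha/g_{i+1}\le1$, linearize the extremal fractional-linear recursion by the substitution $g_{\ell-j}=c_{j+1}/c_j$, and close with a clean monotone induction showing $g_i\le c_{\ell-i+1}/c_{\ell-i}$. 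The two recurrences are equivalent (same characteristic roots $\sigma^{\pm1}/s$), so the final expressions coincide. What your route buys is that the induction step is transparently order-preserving (since $t\mapsto1-\alpha/t$ is increasing), whereas the paper's iteration of a matrix inequality involving a negative entry needs a word of justification; what the paper's route buys is a shorter path to the linear recurrence, bypassing the optimization detour.
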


\begin{proof}
By Lemma \ref{lem:StrictlySubnormal}, we may assume that $H$ is consistently $\alpha$-normal 
with weighted incidence matrix $B$, where $\alpha=\rho^{-r}$. Let $u=u_0e_1u_1\cdots e_{\ell}u_{\ell}=v$ 
be a shortest path in $H$ from $u$ to $v$. For any $i=2$, $3$, $\ldots$, $\ell$, 
from \eqref{eq:B(v,e) is constant} we have
\[
\begin{cases}
x_{u_{i-1}}^r\cdot B(u_{i-1},e_{i-1})=
x_{u_{i-2}}^r\cdot B(u_{i-2},e_{i-1}),\\
x_{u_{i-1}}^r\cdot B(u_{i-1},e_i)=
x_{u_i}^r\cdot B(u_i,e_i).
\end{cases}
\]
Therefore we conclude that
\begin{align*}
\sqrt{x_{u_{i-2}}^r}+\sqrt{x_{u_i}^r} & =\left(\sqrt{\frac{B(u_{i-1},e_{i-1})}{B(u_{i-2},e_{i-1})}}+
\sqrt{\frac{B(u_{i-1},e_i)}{B(u_i,e_i)}}\right)\cdot \sqrt{x_{u_{i-1}}^r}\\
& =\left(\frac{B(u_{i-1},e_{i-1})}{\sqrt{B(u_{i-2},e_{i-1})B(u_{i-1},e_{i-1})}}+
\frac{B(u_{i-1},e_i)}{\sqrt{B(u_i,e_i)B(u_{i-1},e_i)}}\right)\cdot \sqrt{x_{u_{i-1}}^r}\\
& \leqslant \alpha^{-\frac12}\left[B(u_{i-1},e_{i-1})+B(u_{i-1},e_i)\right]\cdot \sqrt{x_{u_{i-1}}^r}\\
& \leqslant\alpha^{-\frac12}\cdot \sqrt{x_{u_{i-1}}^r}=\sqrt{\rho^rx_{u_{i-1}}^r}.
\end{align*}
It follows that 
\[
\sqrt{x_{u_i}^r}\leqslant\sqrt{\rho^rx_{u_{i-1}}^r}-\sqrt{x_{u_{i-2}}^r},~
i=2,3,\ldots,\ell.
\]
From Corollary \ref{coro:l/rho<x_u/x_v<rho} we have $x_{u_1}\leqslant\rho x_{u_0}=\rho x_u$.
Denote $x_{u_{-1}}=0$, we have
\begin{equation}
\label{eq:key}
\sqrt{x_{u_i}^r}\leqslant\sqrt{\rho^r}\sqrt{x_{u_{i-1}}^r}-
\sqrt{x_{u_{i-2}}^r},~i=1,2,\ldots,\ell.
\end{equation}

(1). It is sufficient to prove the left-hand side. 
We may rewrite \eqref{eq:key} as
\[
\begin{pmatrix}[1.6]
\sqrt{x_{u_i}^r}\\
\sqrt{x_{u_{i-1}}^r}
\end{pmatrix}
\leqslant
\begin{pmatrix}
\sqrt{\rho^r} & -1\\
1 & 0
\end{pmatrix}
\begin{pmatrix}[1.6]
\sqrt{x_{u_{i-1}}^r}\\
\sqrt{x_{u_{i-2}}^r}
\end{pmatrix},~
i=1,2,\ldots,\ell.
\]
If $\rho>\sqrt[r]{4}$ we have 
\begin{equation}
\label{eq:matrix form}
\begin{pmatrix}[1.6]
\sqrt{x_{u_i}^r}\\
\sqrt{x_{u_{i-1}}^r}
\end{pmatrix}
\leqslant
\begin{pmatrix}
\sqrt{\rho^r} & -1\\
1 & 0
\end{pmatrix}^i
\begin{pmatrix}[1.6]
\sqrt{x_{u_0}^r}\\
\sqrt{x_{u_{-1}}^r}
\end{pmatrix},~
i=0,1,\ldots,\ell.
\end{equation}
Observe that the fact
\[
\begin{pmatrix}
\sqrt{\rho^r} & -1\\
1 & 0
\end{pmatrix}
=
P\begin{pmatrix}
\sigma & 0\\
0 & \sigma^{-1}
\end{pmatrix}P^{-1},
\]
where
\[
P=
\begin{pmatrix}
1 & 1\\
\sigma^{-1} & \sigma
\end{pmatrix},~
\sigma=\frac12\left(\sqrt{\rho^r}+\sqrt{\rho^r-4}\right).
\]
Hence \eqref{eq:matrix form} now becomes
\[
\begin{pmatrix}[1.6]
\sqrt{x_{u_i}^r}\\
\sqrt{x_{u_{i-1}}^r}
\end{pmatrix}
\leqslant
P\begin{pmatrix}
\sigma^i & 0\\
0 & \sigma^{-i}
\end{pmatrix}P^{-1}
\begin{pmatrix}[1.6]
\sqrt{x_{u_0}^r}\\
\sqrt{x_{u_{-1}}^r}
\end{pmatrix},~
i=0,1,\ldots,\ell.
\]
Recall that $x_{u_{-1}}=0$ and $x_{u_0}=x_u$, we obtain
\begin{align*}
\begin{pmatrix}[1.6]
\sqrt{x_{u_i}^r}\\
\sqrt{x_{u_{i-1}}^r}
\end{pmatrix}
& \leqslant
\begin{pmatrix}
1 & 1\\
\sigma^{-1} & \sigma
\end{pmatrix}
\begin{pmatrix}
\sigma^i & 0\\
0 & \sigma^{-i}
\end{pmatrix}
\begin{pmatrix}
\displaystyle\frac{\sigma}{\sigma-\sigma^{-1}} & 
\displaystyle -\frac{1}{\sigma-\sigma^{-1}}\\[3mm]
\displaystyle-\frac{\sigma^{-1}}{\sigma-\sigma^{-1}} & 
\displaystyle \frac{1}{\sigma-\sigma^{-1}}
\end{pmatrix}
\begin{pmatrix}
\sqrt{x_u^r}\\
0
\end{pmatrix}\\
& =\frac{\sqrt{x_u^r}}{\sigma-\sigma^{-1}}
\begin{pmatrix}
\sigma^{i+1}-\sigma^{-(i+1)}\\
\sigma^i-\sigma^{-i}
\end{pmatrix}.
\end{align*}
Therefore we get
\[
\sqrt{x_v^r}=\sqrt{x_{u_{\ell}}^r}\leqslant 
\frac{\sigma^{\ell+1}-\sigma^{-(\ell+1)}}{\sigma-\sigma^{-1}}\sqrt{x_u^r},
\]
which implies that
\[
\frac{x_u}{x_v}\geqslant 
\left(\frac{\sigma-\sigma^{-1}}{\sigma^{\ell+1}-\sigma^{-(\ell+1)}}\right)^{\frac2r}.
\]

(2). If $\rho=\sqrt[r]{4}$, then \eqref{eq:key} can be written as
\[
\sqrt{x_{u_i}^r}-\sqrt{x_{u_{i-1}}^r}\leqslant 
\sqrt{x_{u_{i-1}}^r}-\sqrt{x_{u_{i-2}}^r},~i=1,2,\ldots,\ell.
\]
Therefore we have
\begin{equation}
\label{eq:when-rho=2}
\sqrt{x_{u_{i}}^r}-\sqrt{x_{u_{i-1}}^r}\leqslant 
\sqrt{x_{u_0}^r}-\sqrt{x_{u_{-1}}^r}=\sqrt{x_u^r},~i=0,1,\ldots,\ell.
\end{equation}
Summing over all $i$ of \eqref{eq:when-rho=2}, we obtain
\[
\sqrt{x_v^r}=\sqrt{x_{u_{\ell}}^r}-\sqrt{x_{u_{-1}}^r}\leqslant 
(\ell+1)\sqrt{x_u^r},
\]
which yields that 
\[
\frac{x_u}{x_v}\geqslant \frac{1}{\sqrt[r]{(\ell+1)^2}}.
\]

The proof is completed. 
\end{proof}

\begin{remark}
If $\rho\geqslant\sqrt[r]{4}$, Theorem \ref{thm:x_u/x_v} is much stronger than 
Theorem \ref{thm:x_u/x_v-1}. Indeed, for $\rho>\sqrt[r]{4}$ we have
\[
\rho^{\frac{r\ell}{2}}=(\sigma+\sigma^{-1})^{\ell}=
\sum_{i=0}^{\ell}\binom{\ell}{i}\sigma^{-\ell+2i}\geqslant
\sum_{i=0}^{\ell}\sigma^{-\ell+2i}=
\frac{\sigma^{\ell+1}-\sigma^{-(\ell+1)}}{\sigma-\sigma^{-1}},
\]
and $(\sqrt[r]{4})^{\ell}=(2^{\ell})^{\frac2r}\geqslant(\ell+1)^{\frac{2}{r}}=\sqrt[r]{(\ell+1)^2}$ 
for $\rho=\sqrt[r]{4}$.
\end{remark}

\begin{corollary}
Let $H$ be a connected $r$-uniform hypergraph on $n$ vertices with spectral radius $\rho>\sqrt[r]{4}$ and 
principal eigenvector $x$. Let $\ell$ be the shortest distance between a vertex having $x_{\min}$ 
and a vertex having $x_{\max}$ as their $x$ components. Then
\begin{equation}
\label{eq:gamma bound}
\gamma(H)\leqslant\left(\frac{\sigma^{\ell+1}-\sigma^{-(\ell+1)}}{\sigma-\sigma^{-1}}\right)^{\frac{2}{r}}.
\end{equation}
\end{corollary}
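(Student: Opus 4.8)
The plan is to obtain \eqref{eq:gamma bound} as an immediate specialization of the upper bound in Theorem~\ref{thm:x_u/x_v}(1), so the only real work is to choose the two vertices correctly. First I would fix a vertex $u\in V(H)$ with $x_u=x_{\max}$ and a vertex $v\in V(H)$ with $x_v=x_{\min}$, chosen among all such pairs so that the distance $d(u,v)$ is as small as possible; by the definition in the statement this minimal distance is exactly $\ell$. With this labelling the principal ratio is literally
\[
\gamma(H)=\frac{x_{\max}}{x_{\min}}=\frac{x_u}{x_v}.
\]

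Since $\rho>\sqrt[r]{4}$ by hypothesis, Theorem~\ref{thm:x_u/x_v}(1) applies verbatim to the pair $u$, $v$ with $d(u,v)=\ell$, and its right-hand inequality gives
\[
\frac{x_u}{x_v}\leqslant\left(\frac{\sigma^{\ell+1}-\sigma^{-(\ell+1)}}{\sigma-\sigma^{-1}}\right)^{\frac{2}{r}},
\qquad
\sigma=\frac12\left(\sqrt{\rho^r}+\sqrt{\rho^r-4}\right).
\]
Substituting $x_u/x_v=\gamma(H)$ then yields \eqref{eq:gamma bound} directly, with no further estimate required: all the analytic content, namely the three-term recursion \eqref{eq:key} and the diagonalization of its companion matrix, has already been carried out in the proof of Theorem~\ref{thm:x_u/x_v}.

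The one point that warrants attention is the orientation of the inequality. Theorem~\ref{thm:x_u/x_v}(1) is symmetric in $u$ and $v$ and furnishes a two-sided bound on $x_u/x_v$, so I must assign $u$ to the \emph{maximum} vertex and $v$ to the \emph{minimum} vertex precisely so that the ratio is at least $1$ and is therefore controlled by the upper bound rather than by the lower one. Beyond this bookkeeping there is no obstacle. It is worth noting, to justify the use of the \emph{shortest} such distance, that for $\rho>\sqrt[r]{4}$ one has $\sigma>1$, and hence the bounding quantity $\bigl(\sigma^{\ell+1}-\sigma^{-(\ell+1)}\bigr)/\bigl(\sigma-\sigma^{-1}\bigr)=\sum_{i=0}^{\ell}\sigma^{\ell-2i}$ is strictly increasing in $\ell$; taking $\ell$ to be the minimal distance between a maximum vertex and a minimum vertex therefore produces the sharpest available bound of this form.
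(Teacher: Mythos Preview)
Your proposal is correct and is exactly the approach the paper intends: the corollary is stated without proof precisely because it is an immediate specialization of Theorem~\ref{thm:x_u/x_v}(1) obtained by taking $u$ to be a vertex with $x_u=x_{\max}$ and $v$ a vertex with $x_v=x_{\min}$ at distance $\ell$, and your discussion of the orientation and of the monotonicity in $\ell$ is a fine elaboration of what the paper leaves implicit.
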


Taking $r=2$ in \eqref{eq:gamma bound}, we get the following result which was proved by 
Cioab$\check{\text{a}}$ and Gregory \cite{Sebastian:Principal eigenvectors}.
\begin{corollary}
[\cite{Sebastian:Principal eigenvectors}]
Let $G$ be a connected graph of order $n$ with spectral radius $\rho>2$ and principal eigenvector $x$. 
Let $\ell$ be the shortest distance from a vertex on which $x$ is maximum to a vertex on which it is 
minimum. Then
\[
\gamma(G)\leqslant\frac{\tau^{\ell+1}-\tau^{-(\ell+1)}}{\tau-\tau^{-1}},
\]
where $\tau=(\rho+\sqrt{\rho^2-4})/2$.
\end{corollary}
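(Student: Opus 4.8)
The plan is to obtain this corollary as the special case $r=2$ of the immediately preceding hypergraph bound \eqref{eq:gamma bound}; essentially no new argument is needed beyond verifying that every object in the statement specializes correctly when $r=2$.

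First I would record that a simple graph $G$ is precisely a $2$-uniform hypergraph whose edges are the unordered pairs of adjacent vertices. For $r=2$ the $(ij)$-entry of the adjacency tensor equals $\tfrac{1}{(r-1)!}=\tfrac{1}{1!}=1$ exactly when $\{i,j\}\in E(G)$ and $0$ otherwise, so $\mathcal{A}(G)$ is the ordinary adjacency matrix $A(G)$. Consequently the eigenvalue equation \eqref{Eigenequations} collapses to $A(G)x=\rho x$, the norm $\|x\|_r$ becomes the Euclidean norm, and Theorem \ref{relaigh} reduces to the usual Rayleigh characterization of the largest eigenvalue of $A(G)$. Hence the spectral radius $\rho$, the principal eigenvector $x$, the extreme entries $x_{\max}$, $x_{\min}$, the principal ratio $\gamma(G)=x_{\max}/x_{\min}$, and the distance $\ell$ all coincide with their classical graph-theoretic meanings, and the connectedness hypothesis is identical in both settings.

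Next I would match the hypotheses and the constants. The assumption $\rho>2$ is exactly $\rho>\sqrt[r]{4}$ with $r=2$, so the hypothesis of \eqref{eq:gamma bound} is satisfied. For the constant, setting $r=2$ in $\sigma=\tfrac12\bigl(\sqrt{\rho^r}+\sqrt{\rho^r-4}\bigr)$ gives
\[
\sigma=\tfrac12\bigl(\sqrt{\rho^2}+\sqrt{\rho^2-4}\bigr)=\tfrac12\bigl(\rho+\sqrt{\rho^2-4}\bigr)=\tau,
\]
where I use $\rho>2>0$ to write $\sqrt{\rho^2}=\rho$. Finally the exponent $\tfrac{2}{r}$ becomes $1$.

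Then I would conclude by substituting $r=2$ into \eqref{eq:gamma bound} and replacing $\sigma$ by $\tau$, which yields
\[
\gamma(G)\leqslant\Bigl(\frac{\tau^{\ell+1}-\tau^{-(\ell+1)}}{\tau-\tau^{-1}}\Bigr)^{1}
=\frac{\tau^{\ell+1}-\tau^{-(\ell+1)}}{\tau-\tau^{-1}},
\]
exactly as asserted. Since the statement is a direct specialization, there is no genuine obstacle; the only point that merits care is the routine but conceptually necessary check that the tensor/eigenvector formalism truly degenerates to the matrix formalism at $r=2$, so that the quantities $\gamma(H)$ and the hypergraph distance in \eqref{eq:gamma bound} really are the graph quantities $\gamma(G)$ and graph distance appearing here.
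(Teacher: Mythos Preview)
Your proposal is correct and matches the paper's approach exactly: the paper simply states ``Taking $r=2$ in \eqref{eq:gamma bound}, we get the following result'' without further argument, and your write-up just spells out that specialization carefully.
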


\section{Spectral radius of sub-hypergraph}
\label{sec4}

It is known from Perron-Frobenius theorem that if $H'$ is a proper sub-hypergraph of a 
connected $r$-uniform hypergraph $H$, then $\rho(H')<\rho(H)$. In this section, we will 
refine quantitatively the gap between $\rho(H)$ and $\rho(H')$. The main result of this 
section is inspired by that of \cite{Nikiforov-1}, and the arguments of Theorem \ref{thm:rho(H)-rho(H')}
have been used in \cite{Nikiforov-1}. For ordinary graphs, Nikiforov \cite{Nikiforov-1} 
proved the following result.
\begin{theorem}
[\cite{Nikiforov-1}]
\label{thm:Vladimir Nikiforov}
If $G'$ is a proper subgraph of a connected graph $G$ on $n$ vertices with diameter $D$, then
\[
\rho(G)-\rho(G')>\frac{1}{n\rho(G)^{2D}}.
\]
\end{theorem}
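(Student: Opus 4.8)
The plan is to reduce the statement to the deletion of a single edge and then to compare the principal eigenvectors of $G$ and of the reduced graph. Since $G'$ is a proper subgraph of the connected graph $G$, there is an edge $e=\{a,b\}\in E(G)\setminus E(G')$ (if $G'$ omits a vertex, one takes an edge of $G$ incident with it, which cannot belong to $G'$). Because $E(G')\subseteq E(G-e)$ we have $G'\subseteq G-e$, hence $\rho(G')\le\rho(G-e)$ by the Perron--Frobenius monotonicity recalled at the start of this section. It therefore suffices to prove $\rho(G)-\rho(G-e)>\tfrac{1}{n\rho^{2D}}$, where I abbreviate $\rho=\rho(G)$ and may assume $n\ge 3$ (the case $n=2$ is immediate).

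For the comparison I would use the exact identity coming from two eigenvectors. Let $x$ be the principal eigenvector of $G$ and let $y$ be the principal eigenvector of $G-e$, extended by zeros to $V(G)$ if $G-e$ is disconnected; both are nonnegative with $\|x\|_2=\|y\|_2=1$. Pairing $A(G)x=\rho x$ against $y$ and $A(G-e)y=\rho(G-e)y$ against $x$ and subtracting gives, by symmetry of the matrices,
\[
\bigl(\rho-\rho(G-e)\bigr)\langle x,y\rangle=\bigl\langle x,(A(G)-A(G-e))y\bigr\rangle=x_ay_b+x_by_a .
\]
Since $x>0$ and $y\ge 0$, $y\ne 0$, the inner product $\langle x,y\rangle$ is positive, and Cauchy--Schwarz gives $\langle x,y\rangle\le 1$ with \emph{strict} inequality, because $x\ne y$ (a common Perron vector of $G$ and $G-e$ would force $x$ to be supported on $\{a,b\}$ only, impossible as $x>0$ and $n\ge 3$). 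Hence
\[
\rho(G)-\rho(G-e)>x_ay_b+x_by_a ,
\]
and it remains to bound the right-hand side below by $n^{-1}\rho^{-2D}$.

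The numerator is where the work lies. Applying the ratio estimate of Theorem~\ref{thm:x_u/x_v-1} (with $r=2$) to $x$, together with $x_{\max}\ge n^{-1/2}$, shows $x_v\ge x_{\max}\rho^{-D}\ge n^{-1/2}\rho^{-D}$ for \emph{every} vertex $v$. The main obstacle is that the same need not hold for $y$: when $e$ is a bridge, $y$ vanishes on one side of $e$, so one of $y_a,y_b$ may be $0$. I would resolve this by a geodesic dichotomy. Let $w$ be a vertex attaining $y_w=\max_v y_v\ge n^{-1/2}$, lying in the support $S$ of $y$, and take a shortest path $P$ in $G$ from $w$ to $a$, of length at most $D$. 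If $P$ avoids $e$, then $P$ stays inside $S$ and certifies $d_{G-e}(w,a)\le D$, so the ratio estimate applied in $G-e$ yields $y_a\ge y_w\,\rho(G-e)^{-D}\ge n^{-1/2}\rho^{-D}$ (using $\rho(G-e)\le\rho$). If $P$ uses $e$, then—since $a$ is its endpoint—the last edge of $P$ is $e$ and its penultimate vertex is $b$, so the initial segment is a shortest $w$--$b$ path avoiding $e$ of length at most $D$, giving $y_b\ge n^{-1/2}\rho^{-D}$. In either case one endpoint $c\in\{a,b\}$ satisfies $y_c\ge n^{-1/2}\rho^{-D}$, while its partner $c'$ satisfies $x_{c'}\ge n^{-1/2}\rho^{-D}$, whence $x_ay_b+x_by_a\ge x_{c'}y_c\ge n^{-1}\rho^{-2D}$. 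Chaining this with the reduction completes the argument. The crux is exactly this control of $y$ at the endpoints of $e$ in the bridge case; the observation that makes it work is that a geodesic of $G$ ending at $a$ either survives in $G-e$ (bounding $y_a$) or has $b$ as its penultimate vertex (bounding $y_b$), so that only a factor $\rho^{D}$ is lost at each endpoint.
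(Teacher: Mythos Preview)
The paper does not give its own proof of Theorem~\ref{thm:Vladimir Nikiforov}; it is quoted from \cite{Nikiforov-1} as motivation, and the paper's own contribution is the hypergraph analogue Theorem~\ref{thm:rho(H)-rho(H')}, whose proof (as the paper remarks) follows Nikiforov's original argument. So strictly speaking there is no in--paper proof to compare with, but since the paper states that its method is the same as Nikiforov's, it is meaningful to compare your argument with the proof of Theorem~\ref{thm:rho(H)-rho(H')} specialized to $r=2$.

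Your proof is correct, and the route is genuinely different. The paper (and, by its own account, Nikiforov) works with \emph{one} Perron vector only, namely that of the subgraph $G'=G-e$, and plugs it into the Rayleigh quotient of $G$ to obtain $\rho(G)-\rho(G')\ge 2x_{v_1}x_{v_2}$; the connected/disconnected dichotomy is then handled via the auxiliary diameter lemmas (Lemma~\ref{lem:diam(H')<k(D+1)}, Lemma~\ref{lem:sum diam}) which control $d_{G'}(w,v_1)+d_{G'}(w,v_2)$. You instead use \emph{both} Perron vectors and the bilinear identity $(\rho-\rho')\langle x,y\rangle=x_ay_b+x_by_a$; the strict inequality is supplied by $\langle x,y\rangle<1$, and the endpoint control by your neat geodesic dichotomy (a shortest $w$--$a$ path in $G$ either lives in $G-e$ or ends with the edge $e$, so one of $y_a,y_b$ is at least $n^{-1/2}\rho^{-D}$). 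This replaces the diameter lemmas entirely and yields the exact constant $1/(n\rho^{2D})$, whereas the paper's method, read at $r=2$, only gives the weaker $\min\{2/(n\rho^{2(D+1)}),\,1/(n\rho^{2D}(\rho+1))\}$. The trade-off is that your two--vector identity relies on the bilinearity of $y^{\mathrm T}A x$, which has no counterpart for the order-$r$ tensor eigenproblem when $r\ge 3$; the paper's single--vector Rayleigh--quotient approach is what survives the passage to hypergraphs.

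Two minor wording points that do not affect validity: the ``initial segment'' in your second case is a $w$--$b$ path in $G-e$ of length at most $D-1$ (hence $\le D$), not necessarily a \emph{shortest} one, though that is all you need; and in applying Theorem~\ref{thm:x_u/x_v-1} to $y$ you implicitly use $\rho(G-e)\ge 1$, which holds since $n\ge 3$ forces $G-e$ to contain at least one edge.
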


The following lemmas will be used in our proof.
\begin{lemma}
[\cite{Diameter Increase}]
\label{lem:diameter increase}
Let $G$ be a connected graph with diameter $D$, and $G'$ be any connected subgraph
obtained by deleting $t$ edges from $G$. Then $diam\,(G')\leqslant (t+1)D$.
\end{lemma}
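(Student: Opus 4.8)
The plan is to work with the two vertices $x,y\in V(G')$ that realize $\operatorname{diam}(G')$ and to fix a shortest $x$--$y$ path $R=(w_0,w_1,\dots,w_L)$ in $G'$, so that $L=\operatorname{diam}(G')$ and $d_{G'}(w_i,w_j)=j-i$ for every $i<j$. The goal is then to show that $R$ cannot be long unless more than $t$ edges have been deleted. A tempting alternative, deleting the edges one at a time and bounding each single-edge increase, only produces a bound that grows geometrically in $t$, so I would argue globally on $R$ instead.

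Everything rests on one clean observation about sub-arcs of $R$: if $i<j$ and $j-i>D$, then every geodesic of $G$ joining $w_i$ and $w_j$ must use at least one deleted edge. Indeed $d_G(w_i,w_j)\le D<j-i=d_{G'}(w_i,w_j)$, so a $G$-geodesic avoiding all deleted edges would lie inside $G'$ and force $d_{G'}(w_i,w_j)\le D$, a contradiction. Thus each sufficiently long stretch of $R$ \emph{certifies} the presence of a deleted edge, and this is the mechanism transferring the excess length of $R$ to the budget $t$.

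Building on this, I would cut the initial part of $R$ into $t+1$ consecutive sub-arcs, each of $G'$-length $D+1$ (feasible as soon as $L\ge (t+1)(D+1)$), and to each arc attach the \emph{first} deleted edge met by a fixed $G$-geodesic between its endpoints. Choosing the first such edge guarantees that the portion of the geodesic from the arc endpoint up to that edge is a genuine $G'$-path of length at most $D-1$, which pins the near endpoint of the edge within $G'$-distance $D-1$ of the arc endpoint. If the $t+1$ edges produced this way were pairwise distinct, they would overrun the available $t$ deleted edges, giving the desired contradiction and hence $L\le (t+1)D$.

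The main obstacle is precisely this distinctness, equivalently a charging argument ensuring that no single deleted edge is blamed for two different arcs. The difficulty is genuine: deleting one edge of a long cycle creates an arbitrarily long detour in $G'$, so one bottleneck edge could a priori account for several consecutive stretches of $R$; pinning an edge near one arc endpoint controls only one of its two sides, and the $G'$-distance between the two endpoints of a deleted edge is itself uncontrolled, so adjacent arcs are not immediately separated. Resolving this needs either a generous spacing of the arcs (which wastes lower-order terms and only gives a bound of the form $(t+1)D+O(t)$) or the finer analysis of how a geodesic crosses a deleted edge carried out in \cite{Diameter Increase}; that careful accounting is what sharpens the estimate to the stated $(t+1)D$.
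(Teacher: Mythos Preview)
The paper does not give a proof of this lemma at all; it is quoted verbatim from \cite{Diameter Increase} (Schoone, Bodlaender, van Leeuwen) and used as a black box in the proofs of Lemma~\ref{lem:diam(H')<k(D+1)} and Lemma~\ref{lem:sum diam}. There is therefore nothing in the present paper to compare your argument against.

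As for the proposal on its own merits: it is a sketch, not a proof, and you say so yourself. You correctly isolate the mechanism---a sub-arc of the $G'$-geodesic of $G'$-length exceeding $D$ forces any $G$-geodesic between its endpoints to use a deleted edge---and you correctly identify the crux, namely showing that the $t+1$ deleted edges produced by your $t+1$ arcs are pairwise distinct. But you then explicitly abandon that step, writing that the sharp bound requires ``the finer analysis \ldots\ carried out in \cite{Diameter Increase}.'' A write-up whose decisive step is delegated back to the reference it is supposed to supply is circular; what you have written is a reading guide to \cite{Diameter Increase}, not an independent proof. Your own diagnosis of the difficulty is accurate: pinning the \emph{first} deleted edge on each $G$-geodesic controls only one endpoint of that edge (placing it within $G'$-distance $D-1$ of the arc's left end), while the other endpoint is uncontrolled in $G'$, so two adjacent arcs could in principle point to the same deleted edge traversed in opposite directions. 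Closing this requires either the original paper's counting or a different charging scheme; as submitted, the argument does not establish the stated inequality $\operatorname{diam}(G')\leqslant (t+1)D$.
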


For an $r$-uniform hypergraph $H$, we define a multiple graph $\widetilde{H}$ as follows:
$\widetilde{H}$ has vertex set $V(H)$, and vertices $u$ and $v$ are adjacent in $\widetilde{H}$ 
if and only if $\{u,v\}\subseteq e\in E(H)$.
\begin{lemma}
\label{lem:diam(H')<k(D+1)}
Let $H$ be an $r$-uniform hypergraph with diameter $D$, and $H'$ be a connected sub-hypergraph 
obtained by deleting an edge of $H$. Then $diam\,(H')\leqslant r(D+1)$.
\end{lemma}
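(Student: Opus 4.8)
The plan is to pass from the hypergraph $H$ to the associated multigraph $\widetilde H$, translate the whole statement into a graph diameter-increase problem, and then exploit the fact that a single hyperedge becomes a clique. The first step is to record that hypergraph distance and $\widetilde H$-distance coincide. Indeed, given a shortest $H$-path $u=w_0e_1w_1\cdots e_\ell w_\ell=v$, consecutive vertices $w_{i-1},w_i$ lie in $e_i$ and are therefore adjacent in $\widetilde H$, so $\mathrm{dist}_{\widetilde H}(u,v)\le \mathrm{dist}_H(u,v)$; conversely, each edge of a $\widetilde H$-path comes from some hyperedge, which lifts the path to an $H$-walk of the same length, giving the reverse inequality. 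Hence $\mathrm{diam}\,\widetilde H=D$ and $\mathrm{diam}\,H'=\mathrm{diam}\,\widetilde{H'}$, and passing from $H$ to $H'=H-e_0$ amounts exactly to deleting from $\widetilde H$ the (at most $\binom{r}{2}$) edges of the clique on $e_0=\{z_1,\ldots,z_r\}$. So it suffices to bound $\mathrm{diam}\,\widetilde{H'}$.

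The key observation, which is what produces the factor $r$, is combinatorial: any shortest path of $\widetilde H$ is a path, hence visits at most $r$ of the vertices $z_1,\ldots,z_r$, and therefore uses at most $r-1$ of the deleted clique edges. Thus, from the viewpoint of any one geodesic, only $r-1$ edges are missing, not all $\binom{r}{2}$. This is precisely the feature that should replace the crude count $\binom{r}{2}$ coming from feeding $t=\binom{r}{2}$ into Lemma \ref{lem:diameter increase} (which would only yield the weaker $(\binom{r}{2}+1)D$), and instead deliver a bound governed by $(r-1)+1=r$.

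To conclude I would reroute, for an arbitrary pair $u,v$, the at most $r-1$ deleted clique edges appearing on a fixed $\widetilde H$-geodesic, aiming for $\mathrm{diam}\,\widetilde{H'}\le rD\le r(D+1)$, and then read the estimate back to $H'$ via $\mathrm{diam}\,H'=\mathrm{diam}\,\widetilde{H'}$.

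The hard part is making the rerouting rigorous. A naive detour replacing one deleted clique edge may itself traverse other deleted edges, so the detours cannot be bounded one at a time against $\widetilde H$; moreover a single deletion can already nearly double distances (a cycle $C_{2D+1}$ shows the diameter can jump from $D$ to $2D$), so the increase per deletion is emphatically \emph{not} bounded by $D$ pointwise. The correct route is to mirror the inductive mechanism behind Lemma \ref{lem:diameter increase}, adding the deleted clique edges back one at a time and tracking how much the diameter can grow at each stage; the crux is to argue that, because every geodesic meets the clique in at most $r-1$ edges, the cumulative growth is controlled by $r-1$ rather than by $\binom{r}{2}$, so that $\mathrm{diam}\,\widetilde{H'}\le ((r-1)+1)D=rD\le r(D+1)$. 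I expect the delicate point to be this passage from a ``total number of deletions'' bound to a ``maximum number of deletions met by a geodesic'' bound, where the clique structure of $e_0$ is used essentially.
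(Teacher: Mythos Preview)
Your reduction to the multigraph $\widetilde H$ and the identification $\mathrm{diam}\,H'=\mathrm{diam}\,\widetilde{H'}$ match the paper exactly. The gap is the final step. You correctly flag that the strengthening of Lemma~\ref{lem:diameter increase} you need---replacing ``$t$ edges deleted'' by ``at most $t$ deleted edges lie on any $\widetilde H$-geodesic''---is the crux, but you do not prove it, and it is not clear the inductive mechanism of Schoone--Bodlaender--van Leeuwen adapts: when you add clique edges back one at a time, the geodesics of the intermediate graphs are \emph{not} $\widetilde H$-geodesics, so your bound on how many deleted edges a $\widetilde H$-geodesic meets gives no control over them. As written, the proposal is a plan that stops precisely at the non-routine point.

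The paper bypasses this with an intermediate graph. With $e_0=\{v_{i_1},\ldots,v_{i_r}\}$, set
\[
H^*=\widetilde H\;-\;\bigcup_{2\leqslant p<q\leqslant r}\{v_{i_p},v_{i_q}\},
\]
so the clique on $e_0$ is thinned to the star centred at $v_{i_1}$. A $\widetilde H$-geodesic contains at most two vertices of the clique (any two are adjacent, so three would allow a shortcut), hence at most one clique edge; rerouting that single edge through $v_{i_1}$ costs one extra step, giving $\mathrm{diam}\,H^*\leqslant D+1$. Now $\widetilde{H'}=H^*-\bigcup_{q=2}^r\{v_{i_1},v_{i_q}\}$ is obtained by deleting exactly $r-1$ edges, and Lemma~\ref{lem:diameter increase} applied verbatim with $t=r-1$ yields $\mathrm{diam}\,\widetilde{H'}\leqslant r(D+1)$. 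The two-stage deletion---non-star clique edges first (cost $+1$), then the $r-1$ star edges---is the device you are missing; it replaces your unproven refinement of Lemma~\ref{lem:diameter increase} by a direct application of it.
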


\begin{proof}
We first prove that $d_{H}(u,v)=d_{\widetilde{H}}(u,v)$ 
for any $u$, $v\in V(H)$. Suppose that $u=u_0e_1u_1e_2\cdots u_{\ell-1}e_{\ell}u_{\ell}=v$ is a path 
in $H$ from $u$ to $v$. Let $\{u_{i-1},u_i\}=f_i$, $i=1$, $2$, $\ldots$, $\ell$, then
$u_0f_1u_1f_2\cdots f_{\ell}u_{\ell}=v$ is a walk in $\widetilde{H}$. Thus 
$d_{H}(u,v)\geqslant d_{\widetilde{H}}(u,v)$. Similarly, we have 
$d_{\widetilde{H}}(u,v)\geqslant d_{H}(u,v)$. Therefore $d_{H}(u,v)=d_{\widetilde{H}}(u,v)$. 
In particular, $diam\,(H)=diam\,(\widetilde{H})$. Let $e=\{v_{i_1},v_{i_2},\ldots,v_{i_r}\}$ 
and $H'=H-e$. Clearly,
\[
\widetilde{H'}=\widetilde{H}-\bigcup_{1\leqslant p<q\leqslant r}\{v_{i_p},v_{i_q}\},
\]
and $v_{i_1}$, $v_{i_2}$, $\ldots$, $v_{i_r}$ induces a clique $K_r$ in $\widetilde{H}$. Let 
\[
H^*=\widetilde{H}-\bigcup_{2\leqslant p<q\leqslant r}\{v_{i_p},v_{i_q}\}.
\]
Clearly, $diam\,(H^*)\leqslant D+1$, and $\widetilde{H'}=H^*-\cup_{q=2}^r\{v_{i_1},v_{i_q}\}$. It 
follows from Lemma \ref{lem:diameter increase} that $diam\,(H')=diam\,(\widetilde{H'})\leqslant r(D+1)$.
\end{proof}

The following result can be found in \cite{Nikiforov-1}.

\begin{lemma}
[\cite{Nikiforov-1}]
\label{lem:dist(w,x)+dist(v,x)}
Let $G$ be a connected graph with diameter $D$, and $G'$ be any connected subgraph
of $G$ obtained by deleting an edge $e=uv$. Then for any $w\in V(G)$ we have
\[
d_{G'}(w,u)+d_{G'}(w,v)\leqslant 2D.
\]
\end{lemma}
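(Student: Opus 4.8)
The plan is to work with a breadth-first (shortest-path) tree $T$ of $G$ rooted at the given vertex $w$, so that $d_{T}(w,x)=d_{G}(w,x)\leqslant D$ for every vertex $x$. The advantage of this choice is twofold: tree distances never exceed $D$, and a path inside $T$ meets the deleted edge $uv$ only if $uv$ happens to be a tree edge. This splits the argument into two cases according to whether $uv\in E(T)$.

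If $uv\notin E(T)$, then $T$ is also a spanning tree of $G'$, so $u$ and $v$ are reached from $w$ by paths of $T$ that avoid $uv$; hence $d_{G'}(w,u)\leqslant D$ and $d_{G'}(w,v)\leqslant D$ simultaneously, and the bound $2D$ is immediate. The substance of the lemma is the case $uv\in E(T)$, which I expect to be the main obstacle. Here one of $u,v$ is the parent of the other in $T$; relabelling if necessary, say $u$ is the parent and $v$ the child, and put $m=d_{G}(w,u)$, so that $d_{G}(w,v)=m+1$. Deleting $uv$ severs $T$ into the part $T_{u}$ containing $w$ and $u$, and the subtree $T_{v}$ hanging below $v$. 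The $T$-path from $w$ to $u$ stays inside $T_{u}$, so $d_{G'}(w,u)\leqslant m$ for free; the difficulty is that $v$ now lies in the severed subtree $T_{v}$, disconnected from $w$ within $T$.

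To reach $v$ in $G'$ I would invoke the hypothesis that $G'$ is connected: since $T_{u}$ and $T_{v}$ partition $V(G)$ and the only $G$-edge between them is $uv$, some edge $xy\in E(G')$ must cross between the two parts, say $x\in T_{u}$ and $y\in T_{v}$. Routing $w\to x$ inside $T_{u}$, then across $xy$, then $y\to v$ inside $T_{v}$ gives a walk in $G'$, and both tree segments avoid $uv$. The key estimate is to bound the two segments by the diameter: $d_{G'}(w,x)\leqslant d_{G}(w,x)\leqslant D$, while the downward segment satisfies $d_{G'}(v,y)\leqslant d_{T}(v,y)=d_{G}(w,y)-d_{G}(w,v)\leqslant D-(m+1)$. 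Adding the crossing edge yields $d_{G'}(w,v)\leqslant D+1+\bigl(D-(m+1)\bigr)=2D-m$, and combining with $d_{G'}(w,u)\leqslant m$ produces the desired $d_{G'}(w,u)+d_{G'}(w,v)\leqslant 2D$.

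The step needing care — and where the real content lies — is the existence and correct use of the crossing edge $xy$: one must note that connectivity of $G'$ forces at least one $G'$-edge between the two tree components, and that splitting the estimate as $d_{G}(w,x)$ plus the ``depth of $y$ below $v$'' is precisely what makes the two halves recombine into $2D-m$ rather than a weaker $2D$. A tempting but lossy alternative, bounding $d_{G'}(w,v)$ via $d_{G'}(w,u)+d_{G'}(u,v)$, fails because $d_{G'}(u,v)$ can itself be as large as roughly $2D$ (as in an even cycle); the tree/cross-edge bookkeeping is exactly what sidesteps this pitfall.
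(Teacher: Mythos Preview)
The paper does not supply its own proof of this lemma; it merely quotes the statement and attributes it to Nikiforov \cite{Nikiforov-1}. So there is no proof in the paper to compare against, and the relevant question is simply whether your argument stands on its own.

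It does, and the BFS-tree strategy is the natural one. One sentence, however, is garbled: you write that ``$T_{u}$ and $T_{v}$ partition $V(G)$ and the only $G$-edge between them is $uv$''. Taken literally this is false (there may be many $G$-edges crossing the cut) and would in fact force $G'$ to be disconnected. What you need---and what you clearly intend---is just that $\{V(T_u),V(T_v)\}$ is a partition of $V(G')$, so connectivity of $G'$ guarantees some edge $xy\in E(G')$ with $x\in T_u$, $y\in T_v$; the clause about ``the only $G$-edge'' should be deleted or replaced by ``the only $T$-edge''. With that fix the estimates are sound: the tree path $w\to x$ lies in $T_u\subseteq G'$ and has length $d_G(w,x)\leqslant D$; the tree path $y\to v$ lies in $T_v\subseteq G'$ and, because $y$ is a descendant of $v$ in the BFS tree, has length $d_G(w,y)-d_G(w,v)\leqslant D-(m+1)$; adding the crossing edge gives $d_{G'}(w,v)\leqslant 2D-m$, and together with $d_{G'}(w,u)\leqslant m$ you obtain the claim.
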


\begin{lemma}
\label{lem:sum diam}
Let $H$ be a connected $r$-uniform hypergraph on $n$ vertices with diameter $D$,
and $H'$ be any connected sub-hypergraph of $H$ obtained by deleting an edge $e\in E(H)$.
Then for any $w\in V(H)$ we have
\[
\sum_{v:v\in e}d_{H'}(w,v)\leqslant r(r-1)(D+1).
\]
\end{lemma}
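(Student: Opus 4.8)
The plan is to transfer the problem to the associated multigraph $\widetilde{H}$ and then reduce the distance sum to repeated applications of the single-edge lemmas already available. Write $e=\{v_{i_1},\dots,v_{i_r}\}$ and set $d_j:=d_{\widetilde{H'}}(w,v_{i_j})$. Recall from the proof of Lemma~\ref{lem:diam(H')<k(D+1)} that hypergraph distances coincide with distances in the multigraph, so $d_{H'}(w,v_{i_j})=d_j$ for every $j$, and that $\widetilde{H'}=\widetilde{H}-K$, where $K$ denotes the clique on $v_{i_1},\dots,v_{i_r}$ contributed by $e$. The organizing identity is that summing a pairwise bound over all unordered pairs counts each vertex exactly $r-1$ times:
\[
\sum_{1\le p<q\le r}\bigl(d_p+d_q\bigr)=(r-1)\sum_{j=1}^{r}d_j.
\]
Hence it suffices to prove the pairwise estimate $d_p+d_q\le 2(r-1)(D+1)$ for each pair; summing over the $\binom{r}{2}$ pairs gives $(r-1)\sum_j d_j\le \binom{r}{2}\cdot 2(r-1)(D+1)=r(r-1)^2(D+1)$, and dividing by $r-1$ yields $\sum_j d_j\le r(r-1)(D+1)$, which is the assertion.

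To obtain the pairwise estimate for a fixed pair $\{p,q\}$, I would introduce an anchored intermediate multigraph. Let $H^{*}_{(p)}$ be obtained from $\widetilde{H}$ by deleting every edge of $K$ not incident to $v_{i_p}$, i.e.\ keeping only the star centred at $v_{i_p}$. Exactly the rerouting argument used for $H^{*}$ in Lemma~\ref{lem:diam(H')<k(D+1)} (a shortest path meets $K$ in at most one edge, which can be replaced by a length-two detour through the common neighbour $v_{i_p}$) shows $\operatorname{diam}(H^{*}_{(p)})\le D+1$. Next delete from $H^{*}_{(p)}$ the remaining $r-2$ star edges $v_{i_p}v_{i_s}$ with $s\ne p,q$, obtaining a multigraph $\Gamma_{pq}$ whose only surviving edge of $K$ is $v_{i_p}v_{i_q}$; by Lemma~\ref{lem:diameter increase} (deleting $r-2$ edges from a graph of diameter $D+1$) we get $\operatorname{diam}(\Gamma_{pq})\le (r-1)(D+1)$. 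Since removing the single edge $v_{i_p}v_{i_q}$ from $\Gamma_{pq}$ returns $\widetilde{H'}$, Lemma~\ref{lem:dist(w,x)+dist(v,x)} applied to $\Gamma_{pq}$ and the edge $v_{i_p}v_{i_q}$ gives
\[
d_p+d_q\le 2\operatorname{diam}(\Gamma_{pq})\le 2(r-1)(D+1),
\]
as required. Connectivity of $\widetilde{H'}$, needed to apply the lemma, follows from the hypothesis that $H'$ is connected.

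The step I expect to be the crux is keeping the diameter of the intermediate graph under control. A naive application of Lemma~\ref{lem:diameter increase} to $\widetilde{H'}=\widetilde{H}-K$ deletes all $\binom{r}{2}$ clique edges at once and only yields a bound of order $r^{2}(D+1)$ for each individual distance, which is too weak. The gain comes from first passing to the anchored graph $H^{*}_{(p)}$, where the common neighbour $v_{i_p}$ collapses the cost of all but $r-2$ of the deletions into a single $+1$, leaving only $r-2$ genuine edge deletions to feed into Lemma~\ref{lem:diameter increase}. One routine point to address is the multigraph bookkeeping: as in Lemma~\ref{lem:diam(H')<k(D+1)}, a pair within $e$ that is also carried by another edge of $H$ retains a copy in $\widetilde{H'}$, but since distances are insensitive to edge multiplicities the diameter estimates above are unaffected (indeed, deleting fewer distinct edges only strengthens them).
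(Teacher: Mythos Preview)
Your proposal is correct and follows essentially the same route as the paper: both pass to the multigraph $\widetilde{H'}$, build the intermediate graph $\widetilde{H'}+\{v_{i_p}v_{i_q}\}$ (your $\Gamma_{pq}$, the paper's $H(v_{i_p},v_{i_q})$), bound its diameter by first anchoring a star at $v_{i_p}$ to get diameter $\le D+1$ and then deleting $r-2$ edges via Lemma~\ref{lem:diameter increase}, and finally apply Lemma~\ref{lem:dist(w,x)+dist(v,x)} to obtain $d_p+d_q\le 2(r-1)(D+1)$. The only difference is in the bookkeeping of the final summation: you sum the pairwise bound over all $\binom{r}{2}$ pairs and divide by $r-1$, whereas the paper sums $r$ inequalities of the form $d_j+d_{r+1-j}\le 2(r-1)(D+1)$ so that each $d_j$ is counted twice; your version is slightly cleaner (and avoids the awkward self-pairing when $r$ is odd), but the substance is identical.
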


\begin{proof}
Let $H'=H-e$, where $e=\{v_{i_1},v_{i_2},\ldots,v_{i_r}\}\in E(H)$ is an edge of $H$. 
For any $1\leqslant p, q\leqslant r$, $p\neq q$, we let 
\[
H(v_{i_p},v_{i_q})=\widetilde{H'}+\{v_{i_p},v_{i_q}\}.
\]
It is clear that 
\[
H(v_{i_p},v_{i_q})=\widetilde{H'}+\bigcup_{\substack{1\leqslant s\leqslant r\\ s\neq p}}
\{v_{i_p},v_{i_s}\}-\bigcup_{\substack{1\leqslant t\leqslant r\\ t\neq p,t\neq q}}\{v_{i_p},v_{i_t}\}.
\]
Notice that 
\[
diam\left(\widetilde{H'}+\bigcup_{1\leqslant s\leqslant r,\,s\neq p}
\{v_{i_p},v_{i_s}\}\right)\leqslant D+1.
\]
It follows from Lemma \ref{lem:diameter increase} that 
$diam\,(H(v_{i_p},v_{i_q}))\leqslant (r-1)(D+1)$. Therefore for 
any $w\in V(H)$, from Lemma \ref{lem:dist(w,x)+dist(v,x)} we have
\[
d_{\widetilde{H'}}(w,v_{i_p})+d_{\widetilde{H'}}(w,v_{i_q})
\leqslant 2\cdot diam\,(H(v_{i_p},v_{i_q}))\leqslant 2(r-1)(D+1).
\]
In particular, we have
\begin{equation}
\label{eq:k(k-1)(D+1)}
\begin{cases}
d_{\widetilde{H'}}(w,v_{i_1})+d_{\widetilde{H'}}(w,v_{i_r})\leqslant 2(r-1)(D+1),\\
d_{\widetilde{H'}}(w,v_{i_2})+d_{\widetilde{H'}}(w,v_{i_{r-1}})\leqslant 2(r-1)(D+1),\\
\hspace{19.6mm}\vdots & \\
d_{\widetilde{H'}}(w,v_{i_r})+d_{\widetilde{H'}}(w,v_{i_1})\leqslant 2(r-1)(D+1).
\end{cases}
\end{equation}
Summing the both sides of \eqref{eq:k(k-1)(D+1)} we obtain
\[
d_{\widetilde{H'}}(w,v_{i_1})+d_{\widetilde{H'}}(w,v_{i_2})+\cdots
+d_{\widetilde{H'}}(w,v_{i_r})\leqslant r(r-1)(D+1).
\]
Observe that for any $u$, $v\in V(H)$, $d_{H'}(u,v)=d_{\widetilde{H'}}(u,v)$, we have
\[
d_{H'}(w,v_{i_1})+d_{H'}(w,v_{i_2})+\cdots
+d_{H'}(w,v_{i_r})\leqslant r(r-1)(D+1).
\]

The proof is completed. 
\end{proof}

The following theorem give an estimate of spectral radii between $H$ and any
proper sub-hypergraph $H'$ of $H$. 
\begin{theorem}
\label{thm:rho(H)-rho(H')}
Suppose that $H$ is a connected $r$-uniform hypergraph on $n$ vertices with diameter
$D$ and spectral radius $\rho$. If $H'$ is a proper sub-hypergraph of $H$, then
\[
\rho(H)-\rho(H')\geqslant\min\left\{\frac{r}{n\rho^{r(r-1)(D+1)}},~
\frac{1}{n\rho^{rD}(\rho^{r-1}+r-1)}\right\}.
\]
\end{theorem}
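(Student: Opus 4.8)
The plan is to reduce the statement to the deletion of a single edge and then split into two cases according to whether the resulting hypergraph stays connected. Since $H'$ is proper, fix an edge $e_0\in E(H)\setminus E(H')$. Then $H'$ is a sub-hypergraph of $H-e_0$, so by the Perron--Frobenius monotonicity of the spectral radius $\rho(H')\le\rho(H-e_0)$, and hence $\rho(H)-\rho(H')\ge\rho(H)-\rho(H-e_0)$. Thus it suffices to lower bound $\rho(H)-\rho(H-e_0)$. Throughout I will use the Rayleigh identity of Theorem \ref{relaigh} in the form $z^{\mathrm T}(\mathcal{A}(H)z)=r\sum_{e\in E(H)}\prod_{v\in e}z_v$ for $z\ge 0$, together with the additivity $\mathcal{A}(H)=\mathcal{A}(H-e_0)+\mathcal{A}(\{e_0\})$, so that for any unit $z\ge 0$ one has $\rho(H)\ge z^{\mathrm T}(\mathcal{A}(H)z)=z^{\mathrm T}(\mathcal{A}(H-e_0)z)+r\prod_{v\in e_0}z_v$.

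First I would treat the case where $H-e_0$ is connected, which produces the first term in the minimum. Let $y$ be the principal eigenvector of $H-e_0$; it is strictly positive, $\|y\|_r=1$, and $y^{\mathrm T}(\mathcal{A}(H-e_0)y)=\rho(H-e_0)$. The displayed inequality then gives $\rho(H)-\rho(H-e_0)\ge r\prod_{v\in e_0}y_v$. To bound the product from below, apply the ratio estimate of Theorem \ref{thm:x_u/x_v-1} inside $H-e_0$: for every vertex $w$ and every $v\in e_0$, $y_w\le(\rho(H-e_0))^{d_{H-e_0}(w,v)}y_v\le\rho^{\,d_{H-e_0}(w,v)}y_v$. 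Taking the product over the $r$ vertices of $e_0$ and invoking Lemma \ref{lem:sum diam} to bound $\sum_{v\in e_0}d_{H-e_0}(w,v)\le r(r-1)(D+1)$ yields $y_w^r\le\rho^{\,r(r-1)(D+1)}\prod_{v\in e_0}y_v$. Summing over all $n$ vertices and using $\sum_w y_w^r=1$ gives $\prod_{v\in e_0}y_v\ge\big(n\rho^{\,r(r-1)(D+1)}\big)^{-1}$, hence $\rho(H)-\rho(H-e_0)\ge r/\big(n\rho^{\,r(r-1)(D+1)}\big)$.

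Next I would treat the case where $H-e_0$ is disconnected, which should produce the second term. Since $e_0$ is the only deleted edge, it is the unique edge of $H$ joining distinct components of $H-e_0$, every component meets $e_0$, and---because a path between two vertices of one component cannot reuse $e_0$---each component $C$ satisfies $\operatorname{diam}(C)\le D$. Let $C$ be a component with $\rho(C)=\rho(H-e_0)$, let $y$ be its principal eigenvector extended by zero, and pick $v_1\in e_0\cap C$. Since $y$ makes $\prod_{v\in e_0}y_v=0$, I would perturb it: set $z=y$ on $C$ and $z_v=t$ on the vertices of $e_0$ outside $C$, with $t>0$. Then $z^{\mathrm T}(\mathcal{A}(H-e_0)z)=\rho(H-e_0)$, $\|z\|_r^r=1+mt^r$ with $m=|e_0\setminus C|$, and $\prod_{v\in e_0}z_v=t^{m}\prod_{v\in e_0\cap C}y_v$. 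In the representative case $|e_0\cap C|=1$ (so $m=r-1$), choosing $t=y_{v_1}/\rho(H-e_0)$ collapses the numerator and, after using $\rho(H-e_0)\ge 1$ and $y_{v_1}\le 1$, gives $\rho(H)-\rho(H-e_0)\ge y_{v_1}^r/(\rho^{r-1}+r-1)$. Finally, $\operatorname{diam}(C)\le D$ and Theorem \ref{thm:x_u/x_v-1} give $y_{v_1}\ge\rho^{-D}\max_{w\in C}y_w\ge\rho^{-D}n^{-1/r}$, so $y_{v_1}^r\ge(n\rho^{rD})^{-1}$ and $\rho(H)-\rho(H-e_0)\ge 1/\big(n\rho^{rD}(\rho^{r-1}+r-1)\big)$. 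Combining the two cases yields the claimed minimum.

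The main obstacle is the disconnected case. The structural facts---that $e_0$ is the unique crossing edge, that every component meets $e_0$, and that $\operatorname{diam}(C)\le D$---are routine but must be checked, since Lemma \ref{lem:sum diam} is unavailable here (it assumes $H-e_0$ connected) and this is exactly what forces the smaller exponent $rD$. More delicate is a general number $s=|e_0\cap C|\ge 2$ of edge-vertices inside $C$: then $\prod_{v\in e_0}z_v=t^{\,r-s}\prod_{j=1}^{s}y_{v_j}$, and the clean choice above must be replaced by optimizing $t$ against the bound $\prod_{j=1}^s y_{v_j}\ge(n\rho^{rD})^{-s/r}$ (itself obtained from $\operatorname{diam}(C)\le D$). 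One must verify that this optimization never yields a bound weaker than $1/\big(n\rho^{rD}(\rho^{r-1}+r-1)\big)$, so that the second term dominates every sub-case; this uniform bookkeeping, rather than any single inequality, is where the argument requires the most care.
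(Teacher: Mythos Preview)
Your outline matches the paper's proof almost exactly: the same reduction to a single deleted edge, the same Rayleigh-quotient argument with the principal eigenvector of $H-e_0$ in the connected case (together with Lemma~\ref{lem:sum diam}), and the same perturbation of the principal eigenvector of the dominant component $C$ in the disconnected case.

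The only point where you diverge is the sub-case $s=|e_0\cap C|\ge 2$, which you flag as the main obstacle and propose to treat by optimizing $t$ against the product bound $\prod_{j=1}^s y_{v_j}\ge(n\rho^{rD})^{-s/r}$. The paper avoids this optimization entirely with a simpler trick: set $x_0=\min\{y_{v_1},\ldots,y_{v_s}\}$ and take $t=x_0/\rho_1$ (where $\rho_1=\rho(C)$), exactly as in your $s=1$ computation. Then
\[
\prod_{v\in e_0}z_v=\Big(\prod_{j=1}^{s}y_{v_j}\Big)\Big(\frac{x_0}{\rho_1}\Big)^{r-s}\ \ge\ \frac{x_0^{r}}{\rho_1^{\,r-s}}\ \ge\ \frac{x_0^{r}}{\rho_1^{\,r-1}},\qquad
\|z\|_r^r=1+(r-s)\frac{x_0^r}{\rho_1^{r}}\ \le\ 1+(r-1)\frac{x_0^r}{\rho_1^{r}},
\]
so both numerator and denominator of the Rayleigh quotient are bounded uniformly in $s$ and the algebra collapses to your $s=1$ case, yielding $\rho(H)-\rho_1\ge x_0^r/(\rho_1^{r-1}+r-1)$. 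A single application of Theorem~\ref{thm:x_u/x_v-1} inside $C$ (using $\operatorname{diam}(C)\le D$) then gives $x_0\ge\rho_1^{-D}\max_{w\in C}y_w\ge(|C|\,\rho_1^{rD})^{-1/r}$, and you finish as written. So the ``uniform bookkeeping'' you worried about is a one-line monotonicity in $s$, not a genuine optimization.
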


\begin{proof}
From Perron-Frobenius theorem for nonnegative tensors, $\rho(H')\leqslant\rho(H)$ whenever 
$H'\subseteq H$. Therefore we may assume that $H'$ is a maximal proper sub-hypergraph of $H$, 
i.e., $V(H')=V(H)$ and $H'$ differs from $H$ in a single edge $e_0=\{v_1,v_2,\ldots,v_r\}$. 
We distinguish the following two cases.\par

{\bfseries Case (i).} $H'$ is connected. Let $x$ be the principal eigenvector of $H'$ and $x_w$ be the 
maximum entry of $x$. For an edge $e$ and vector $x$, we adopt the symbol $x^e:=\prod_{v\in e}x_v$ 
from \cite{LiHongHai:extremal spectral radii}. Therefore we have
\begin{align*}
\rho(H)-\rho(H') & \geqslant x^{\mathrm{T}}(\mathcal{A}(H)x)-x^{\mathrm{T}}(\mathcal{A}(H')x)\\
& =r\sum_{e\in E(H)}x^{e}-r\sum_{e\in E(H')}x^e\\
& =rx^{e_0}.
\end{align*}
From Theorem \ref{thm:x_u/x_v-1} and Lemma \ref{lem:sum diam}, we have
\begin{align*}
x^{e_0} & =\frac{x_{v_1}x_{v_2}\cdots x_{v_r}}{x_w^r}\cdot x_w^r\\
& =\frac{x_{v_1}}{x_w}\cdot\frac{x_{v_2}}{x_w}\cdots \frac{x_{v_r}}{x_w}\cdot x_w^r\\
& \geqslant\frac{x_w^r}{\rho^{d_{H'}(v_1,w)+d_{H'}(v_2,w)+\cdots+d_{H'}(v_r,w)}}\\
& \geqslant\frac{x_w^r}{\rho^{r(r-1)(D+1)}}\geqslant\frac{1}{n\rho^{r(r-1)(D+1)}}.
\end{align*}
Therefore we get
\[
\rho(H)-\rho(H')\geqslant\frac{r}{n\rho^{r(r-1)(D+1)}}.
\]

{\bfseries Case (ii).} $H'$ is disconnected. Suppose that $H'$ has $t$ connected components $H_1$, 
$H_2$, $\ldots$, $H_t$ $(t\geqslant 2)$. It is known that $\rho(H')=\max_{1\leqslant i\leqslant t}\{\rho(H_i)\}$ 
(see Lemma 9 of \cite{XiyingYuan:conjecture of Qi}). Without loss of generality, we may assume that 
$\rho(H')=\rho(H_1)$. For convenience, we denote by $|H_1|=m$, $\rho_1=\rho(H')$, and 
let $x=(x_1,x_2,\ldots,x_m)^{\mathrm{T}}$ be the principal eigenvector of $H_1$ corresponding 
to $\rho_1$. Since $H$ is connected, we know that $e_0\cap H_i\neq\emptyset$, $i=1$, $2$, 
$\ldots$, $t$. Let $\{e_0\}\cap V(H_1)=\{v_1,v_2,\ldots,v_s\}$, $1\leqslant s\leqslant r-1$. 
In the light of Theorem \ref{thm:x_u/x_v-1} and $diam\,(H_1)\leqslant D$ we have
\begin{equation}
\label{eq:rho/x0}
x_{v_j}\geqslant\frac{x_{\max}}{\rho_1^D},~j=1,2,\ldots,s,
\end{equation}
where $x_{\max}=\max\{x_1,x_2,\ldots,x_m\}$.
Let $x_0=\min\{x_{v_1},x_{v_2},\ldots,x_{v_s}\}$. Denote
\[
y=(y_1,y_2,\ldots,y_m,y_{m+1},\ldots,y_{m+r-s})^{\mathrm{T}}=
\Big(x_1,x_2,\ldots,x_m,\underbrace{\frac{x_0}{\rho_1},\ldots,\frac{x_0}{\rho_1}}_{r-s}\Big)^{\mathrm{T}}.
\]
It follows that
\[
||y||^r_r=1+(r-s)\frac{x_0^r}{\rho_1^{r}}.
\]
Therefore we have
\begin{align*}
\rho(H) & \geqslant \rho(H_1+e_0)\geqslant \frac{y^{\mathrm{T}}(\mathcal{A}(H_1+e_0)y)}{||y||^r_r}\\
& =\frac{\rho_1^r}{\rho_1^r+(r-s)x_0^r}\left(r\sum_{e'\in E(H_1)}y^{e'}+ry^{e_0}\right)\\
& =\frac{\rho_1^r}{\rho_1^r+(r-s)x_0^r}\left(x^{\mathrm{T}}(\mathcal{A}(H_1)x)+
\frac{rx_{v_1}x_{v_2}\cdots x_{v_s}x_0^{r-s}}{\rho_1^{r-s}}\right)\\
& \geqslant\frac{\rho_1^r}{\rho_1^r+(r-1)x_0^r}\left(\rho_1+\frac{rx_0^r}{\rho_1^{r-1}}\right)\\
& =\rho_1+\frac{\rho_1 x_0^r}{\rho_1^r+(r-1)x_0^r}.
\end{align*}
Notice that $x_0^r\leqslant 1\leqslant\rho_1$. Hence we obtain that
\begin{align*}
\label{eq:rho(H)-rho}
\rho(H)-\rho_1\geqslant\frac{\rho_1 x_0^r}{\rho_1^r+(r-1)x_0^r}
\geqslant\frac{\rho_1 x_0^r}{\rho_1^r+(r-1)\rho_1}
=\frac{x_0^r}{\rho_1^{r-1}+r-1}.
\end{align*}
From \eqref{eq:rho/x0} we have
\begin{align*}
\rho(H)-\rho_1 & \geqslant\frac{x_{\max}^r}{\rho_1^{rD}}\cdot\frac{1}{\rho_1^{r-1}+r-1}\\
& \geqslant\frac{1}{m\rho_1^{rD}}\cdot\frac{1}{\rho_1^{r-1}+r-1}\\
& \geqslant\frac{1}{n\rho^{rD}(\rho^{r-1}+r-1)}.
\end{align*}

The proof is completed. 
\end{proof}

\section*{References}

\end{document}